\let\ifGm@compatii\relax\makeatother
\begin{document}

\theoremstyle{plain}
\newtheorem{Thm}{Theorem}[section]
\newtheorem{TitleThm}[Thm]{}
\newtheorem{Corollary}[Thm]{Corollary}
\newtheorem{Proposition}[Thm]{Proposition}
\newtheorem{Lemma}[Thm]{Lemma}
\newtheorem{Conjecture}[Thm]{Conjecture}
\theoremstyle{definition}
\newtheorem{Definition}[Thm]{Definition}
\theoremstyle{definition}
\newtheorem{Example}[Thm]{Example}
\newtheorem{TitleExample}[Thm]{}
\newtheorem{Remark}[Thm]{Remark}
\newtheorem{SimpRemark}{Remark}
\renewcommand{\theSimpRemark}{}

\numberwithin{equation}{section}

\newcommand{\C}{{\mathbb C}}
\newcommand{\Q}{{\mathbb Q}}
\newcommand{\R}{{\mathbb R}}
\newcommand{\Z}{{\mathbb Z}}
\newcommand{\mbS}{{\mathbb S}}
\newcommand{\mbU}{{\mathbb U}}
\newcommand{\mbO}{{\mathbb O}}
\newcommand{\mbG}{{\mathbb G}}
\newcommand{\mbH}{{\mathbb H}}

\newcommand{\flushpar}{\par \noindent}

\newcommand{\proj}{{\rm proj}}
\newcommand{\coker}{{\rm coker}\,}
\newcommand{\Sol}{{\rm Sol}}
\newcommand{\supp}{{\rm supp}\,}
\newcommand{\codim}{{\operatorname{codim}}}
\newcommand{\sing}{{\operatorname{sing}}}
\newcommand{\Tor}{{\operatorname{Tor}}}
\newcommand{\Hom}{{\operatorname{Hom}}}
\newcommand{\wt}{{\operatorname{wt}}}
\newcommand{\graph}{{\operatorname{graph}}}
\newcommand{\rk}{{\operatorname{rk}}}
\newcommand{\dlog}{{\operatorname{Derlog}}}
\newcommand{\Olog}[2]{\Omega^{#1}(\text{log}#2)}
\newcommand{\produnion}{\cup \negmedspace \negmedspace 
\negmedspace\negmedspace {\scriptstyle \times}}
\newcommand{\pd}[2]{\dfrac{\partial#1}{\partial#2}}

\def \ba {\mathbf {a}}
\def \bb {\mathbf {b}}
\def \bc {\mathbf {c}}
\def \bd {\mathbf {d}}
\def \bone {\boldsymbol {1}}
\def \bg {\mathbf {g}}
\def \bG {\mathbf {G}}
\def \bh {\mathbf {h}}
\def \bi {\mathbf {i}}
\def \bj {\mathbf {j}}
\def \bk {\mathbf {k}}
\def \bK {\mathbf {K}}
\def \bm {\mathbf {m}}
\def \bn {\mathbf {n}}
\def \bt {\mathbf {t}}
\def \bu {\mathbf {u}}
\def \bv {\mathbf {v}}
\def \by {\mathbf {y}}
\def \bV {\mathbf {V}}
\def \bx {\mathbf {x}}
\def \bw {\mathbf {w}}
\def \b1 {\mathbf {1}}
\def \bga {\boldsymbol \alpha}
\def \bgb {\boldsymbol \beta}
\def \bgg {\boldsymbol \gamma}

\def \itc {\text{\it c}}
\def \ite {\text{\it e}}
\def \ith {\text{\it h}}
\def \iti {\text{\it i}}
\def \itj {\text{\it j}}
\def \itm {\text{\it m}}
\def \itM {\text{\it M}} 
\def \itn {\text{\it n}}
\def \ithn {\text{\it hn}}
\def \itt {\text{\it t}}

\def \cA {\mathcal{A}}
\def \cB {\mathcal{B}}
\def \cC {\mathcal{C}}
\def \cD {\mathcal{D}}
\def \cE {\mathcal{E}}
\def \cF {\mathcal{F}}
\def \cG {\mathcal{G}}
\def \cH {\mathcal{H}}
\def \cK {\mathcal{K}}
\def \cL {\mathcal{L}}
\def \cM {\mathcal{M}}
\def \cN {\mathcal{N}}
\def \cO {\mathcal{O}}
\def \cP {\mathcal{P}}
\def \cS {\mathcal{S}}
\def \cT {\mathcal{T}}
\def \cU {\mathcal{U}}
\def \cV {\mathcal{V}}
\def \cW {\mathcal{W}}
\def \cX {\mathcal{X}}
\def \cY {\mathcal{Y}}
\def \cZ {\mathcal{Z}}

\def \ga {\alpha}
\def \gb {\beta}
\def \gg {\gamma}
\def \gd {\delta}
\def \ge {\epsilon}
\def \gevar {\varepsilon}
\def \gk {\kappa}
\def \gl {\lambda}
\def \gs {\sigma}
\def \gt {\tau}
\def \gw {\omega}
\def \gz {\zeta}
\def \gG {\Gamma}
\def \gD {\Delta}
\def \gL {\Lambda}
\def \gS {\Sigma}
\def \gW {\Omega}

\def \dim {{\rm dim}\,}
\def \mod {{\rm mod}\;}
\def \rank {{\rm rank}\,}

\newcommand{\ds}{\displaystyle}
\newcommand{\vf}{\vspace{\fill}}
\newcommand{\vect}[1]{{\bf{#1}}}
\def\R{\mathbb R}
\def\C{\mathbb C}
\def\CP{\mathbb{C}P}
\def\RP{\mathbb{R}P}
\def\N{\mathbb N}

\def\Sym{\mathrm{Sym}}
\def\Sk{\mathrm{Sk}}
\def\GL{\mathrm{GL}}
\def\Diff{\mathrm{Diff}}
\def\id{\mathrm{id}}
\def\Pf{\mathrm{Pf}}
\def\sll{\mathfrak{sl}}
\def\g{\mathfrak{g}}
\def\h{\mathfrak{h}}
\def\k{\mathfrak{k}}
\def\t{\mathfrak{t}}
\def\OcN{\mathscr{O}_{\C^N}}
\def\Ocn{\mathscr{O}_{\C^n}}
\def\Ocm{\mathscr{O}_{\C^m}}
\def\Ocnz{\mathscr{O}_{\C^n,0}}
\def\Derlog{\mathrm{Derlog}\,}
\def\expdeg{\mathrm{exp\,deg}\,}

\title[Characteristic Cohomology I]
{Characteristic Cohomology I: Singularities of Given Type}
\author[James Damon]{James Damon}

\address{Department of Mathematics, University of North Carolina, Chapel 
Hill, NC 27599-3250, USA
}

\keywords{singularities of given type, characteristic cohomology of Milnor fibers, complements, links, invariance under $\cK_{\cV}$ and $\cK_H$-equivalence, detecting nonvanishing characteristic cohomology, vanishing compact models, matrix singularities, discriminants, bifurcation sets, hypersurface arrangements, exceptional orbit hypersurfaces}

\subjclass{Primary: 11S90, 32S25, 55R80
Secondary:  57T15, 14M12, 20G05}

\begin{abstract}
For a germ of a variety $\cV, 0 \subset \C^N, 0$, a singularity $\cV_0$ of \lq\lq type $\cV$\rq\rq\, is given by a germ $f_0 : \C^n, 0 \to \C^N, 0$ which is transverse to $\cV$ in an appropriate sense so that $\cV_0 = f_0^{-1}(\cV)$.  If $\cV$ is a hypersurface germ, then so is $\cV_0 $, and by transversality $\codim_{\C} \sing(\cV_0) = \codim_{\C} \sing(\cV)$ provided $n > \codim_{\C} \sing(\cV)$.  So $\cV_0, 0$ will exhibit singularities of $\cV$ up to codimension $n$.  \par 
For singularities $\cV_0, 0$ of type $\cV$, we introduce a method to capture the contribution of the topology of $\cV$ to that of $\cV_0$.  It is via the \lq\lq characteristic cohomology\rq\rq\, of the Milnor fiber (for $\cV, 0$ a hypersurface), and complement and link of $\cV_0$ (in the general case).  The characteristic cohomology of the Milnor fiber $\cA_{\cV}(f_0; R)$, respectively of the complement $\cC_{\cV}(f_0; R)$, are subalgebras of the cohomology of the Milnor fibers, respectively the complement, with coefficients $R$ in the corresponding cohomology. For a fixed $\cV$, they are functorial over the category of singularities of type $\cV$.  In addition, for the link of $\cV_0$ there is a characteristic cohomology subgroup $\cB_{\cV}(f_0, \bk)$ of the cohomology of the link over a field $\bk$ of characteristic $0$.  The cohomologies $\cC_{\cV}(f_0; R)$ and $\cB_{\cV}(f_0, \bk)$ are shown to be invariant under the $\cK_{\cV}$-equivalence of defining germs $f_0$, and likewise $\cA_{\cV}(f_0; R)$ is shown to be invariant under the $\cK_{H}$-equivalence of $f_0$ for $H$ the defining equation of $\cV, 0$.  
\par
We give a geometric criterion involving \lq\lq vanishing compact models\rq\rq\, for both the Milnor fibers and complements which detect nonvanishing subalgebras of the characteristic cohomologies, and subgroups of the characteristic cohomology of the link.
Also, we consider how in the hypersurface case the cohomology of the Milnor fiber as a module over the characteristic cohomology $\cA_{\cV}(f_0; R)$. We briefly consider the application of these results to a number of cases of singularities of a given type.  In part II we specialize to the case of matrix singularities and using results on the topology of the Milnor fibers, complements and links of the varieties of singular matrices obtained in another paper allow us to give precise results for the characteristic cohomology of all three types.  
\end{abstract} 
\maketitle
\vspace{2ex}
\centerline{\bf Preliminary Version}
\vspace{1ex}
\par
\section*{Introduction}  
\label{S:sec0} 
\par
For a germ of a hypersurface $\cV_0, 0 \subset \C^n, 0$ with a nonisolated singularity, a result of Kato-Masumoto \cite{KMs} states that the connectivity of the Milnor fiber may decrease by $r = \dim_{\C} \sing(\cV_0)$.  Thus, it may have nonzero (co)homology $H^j(\cV_0)$ in dimension $n -1 - r \leq j \leq n-1$.  For very low dimensional singular sets of dimension $\leq 2$, with special forms for $\sing(\cV_0)$ and the transverse types of the defining equation $f_0$ on $\sing(\cV_0)$, the work of Siersma and coworkers Pellikan, Tibar, Nemethi, Zaharia, Van Straten, etc., have determined the topological structure of the Milnor fibers (see e.g. the survey \cite{Si}). However, very little is known about the topology for hypersurfaces with higher dimensional singular sets.  We consider in this paper how we may introduce in such a situation more information about the topology of a singularity $\cV_0$, which is based on a \lq\lq universal singularity\rq\rq\, $\cV$, even when it is highly nonisolated.  This will be done by identifying how topological properties of $\cV$ are inherited by $\cV_0$.   \par
	We give a general formulation for the category of singularities $\cV_0$ of 
\lq\lq type $\cV$\rq\rq\, for a fixed germ of a variety $\cV, 0 \subset \C^N, 0$ defined as 
$\cV_0 = f_0^{-1}(\cV)$ for a germ $f_0 : \C^n, 0 \to \C^N, 0$ (which for a subcategory is transverse to $\cV$ in an appropriate sense).  If $\cV$ is a hypersurface germ, then so is $\cV_0$.  If $\cV, 0$ is a highly singular germ and $n > \codim_{\C} \sing(\cV)$, then by transversality, $\cV_0, 0$ will also exhibit singularities of $\cV$ up to codimension $n$, and hence also in general be highly singular.  Nonetheless we define the characteristic cohomology for the Milnor fiber (for the hypersurface case), and the complement and link of $\cV_0$ (in the general case). 
\par
The \lq\lq characteristic cohomology algebra\rq\rq\ of the Milnor fiber of $\cV_0$ is defined as $\cA_{\cV}(f_0; R) = \widetilde{f_0}^*(H^*(F_w; R)$, for 
$\widetilde{f_0}; \cV_w \to F_w$ the induced map of Milnor fibers.  Likewise, the \lq\lq characteristic cohomology algebra\rq\rq\ of the link is defined to be $\cC_{\cV}(f_0; R) = f_0^*(H^*(\C^N \backslash \cV; R)$ (which is understood in the sense of local cohomology).  Both of these are shown to be well-defined and functorial over the category of singularities of type $\cV$ for a fixed singularity $\cV$.  For a field $\bk$ of characteristic $0$, the \lq\lq characteristic cohomology (subspace)\rq\rq\ of the link, 
$\cB_{\cV}(f_0; \bk)$ is defined to be the Alexander dual of the Kronecker dual of 
$\cC_{\cV}(f_0; \bk)$.  It is not functorial, but is natural with respect to a relative form of the Gysin homomorphism.  \par
We show that $\cA_{\cV}(f_0; R)$ is invariant, up to an algebra isomorphism of the cohomology of the Milnor fiber, under $\cK_H$-equivalence of $f_0$  (i.e. 
$\cK$-equivalence of $f_0$ preserving the defining equation $H$ of $\cV$, see e.g. \cite{DM}).  Also, both $\cC_{\cV}(f_0; R)$ and $\cB_{\cV}(f_0; \bk)$ are invariant 
under $\cK_{\cV}$-equivalence of $f_0$, up to an algebra isomorphism of the cohomology of the complement, resp. the isomorphism of the cohomology group of the link.  This will allow us to give a structural form for the cohomology of the Milnor fiber (in the hypersurface case) and of the complement (for general $\cV$), as modules over corresponding \lq\lq characteristic subalgebras\rq\rq.  Furthermore, we give results about the exact form of these characteristic subalgebras.  \par 
In Part II \cite{D6}, we will give results for categories of matrix singularities where $\cV$ denotes any of the varieties of singular $m \times m$ complex matrices which may be either general, symmetric or skew-symmetric (with $m$ even) and for $m \times p$ matrices with $m \neq p$.  These give rise to \lq\lq matrix singularities\rq\rq\, $\cV_0$ of any of the corresponding types.  For matrix singularities the characteristic cohomology will give the analogue of characteristic classes for vector bundles.
\par
In \S \ref{S:sec7} we begin to investigate for hypersurface singularities how the cohomology of the Milnor fiber can be understood as a module over the characteristic cohomology subalgebra and the role that the topology of the singular Milnor fiber plays.  This is further considered  for examples in \S \ref{S:sec11}. 
\par
Lastly, we consider in \S \ref{S:sec11} a number of general classes of nonisolated complex singularities which are of a given \lq\lq universal type\rq\rq\,.  These include discriminants of finitely determined (holomorphic) map germs; bifurcation sets for 
$\cG$-equivalence where $\cG$ is a geometric subgroup of $\cA$ or $\cK$ in the holomorphic category; generic hyperplane or hypersurface arrangements based on special central complex hyperplane arrangements, and determinantal arrangements arising from exceptional orbit varieties of prehomogeneous spaces (which includes matrix singularities).  We consider how specific results for these examples reveal the role that characteristic cohomology is playing for these cases.  
\par 
\vspace{1ex}
\centerline{CONTENTS}
\vspace{1ex}
\begin{enumerate}
\item  Characteristic Cohomology of Singularities of type $\cV$ 
\par 
\vspace{2ex} 
\par
\item  $\cK_H$ and $\cK_{\cV}$ Invariance of Characteristic Cohomology 
\par 
\vspace{2ex} 
\par\item Detecting the Nonvanishing of Characteristic Cohomology
\par 
\vspace{2ex} 
\par
\item	Module Structure for the Cohomology of Milnor Fibers of Matrix Singularities
\par 
\vspace{2ex}
\par
\item	Detecting Characteristic Cohomology for Various General Cases

\end{enumerate}
\vspace{3ex}

\section{Characteristic Cohomology of Singularities of type $\cV$} 
\label{S:sec1}
We begin by considering singularities arising as nonlinear sections of some given \lq\lq universal \rq\rq\, singularity $\cV, 0$.  There are many fundamental examples of such universal singularities which are, in particular, hypersurface singularities including: reflection hyperplane arrangements, discriminants of stable map germs, bifurcation sets for the $\cG$-versal unfoldings of germs for many different singularity equivalence groups $\cG$ which are \lq\lq geometric subgroups of $\cA$ or $\cK$\rq\rq\, (see e.g. \cite{D2} and papers cited therein), exceptional orbit hypersurfaces of prehomogeneous spaces \cite{D4} which include both reductive groups, e.g. \cite{BM}, and solvable groups \cite{DP2}, \cite{DP3}, as well as specifically the varieties of singular $m \times m$ matrices which may be general, symmetric, or skew-symmetric (if $m$ is even).  There are also other classes of universal singularities which are not hypersurface singularities, such as bifurcation sets for certain $\cG$-versal unfoldings and varieties of singular $m \times p$ matrices with $m \neq p$.  The results for complements and links will also be applicable to the non-hypersurface cases.
\par 
\subsection*{Category of Singularities of Type $\cV$} \hfill 
\par
We recall from \cite{D5}  that given a germ of an analytic set $\cV , 0 \subset \C^N, 0$, a \lq\lq nonlinear section\rq\rq  is given by a germ of a holomorphic map $f_0 : \C^n, 0 \to \C^N, 0$ (so that $f_0(\C^n) \not \subset \cV$), where $n$ may take any value (including allowing $n > N$).  The associated singularity of type $\cV$ is $\cV_0 = f_0^{-1}(\cV)$.  
\begin{equation}
\label{tag1.1}
\begin{CD}
  @.  {\C^n,0} @>{f_0}>> \C^N,0\\
@.  @AAA @AAA \\
{f_0^{-1}(\cV)}  @=  \cV_0, 0  @>>> \cV, 0  
\end{CD} 
\end{equation}
\par
We consider the {\em category of singularities of type $\cV$}. The objects are the singularities of type $\cV$.  
Given two singularities of type $\cV$: $\cV_0$ defined by $f_0 : \C^n, 0 \to \C^N, 0$ and $\cW_0$ defined by $g_0 : \C^s, 0 \to \C^N, 0$, a morphism $\psi : \cW_0, 0 \to \cV_0, 0$ is given by a germ $\tilde{\psi} : \C^s, 0 \to \C^n, 0$  such that $g_0 = f_0 \circ \tilde{\psi}$.  Such singularities of type $\cV$ and the corresponding morphisms between them give a category on which we will define the characteristic cohomology.  \par
The basic equivalence for studying the ambient equivalence of such $\cV_0$ is 
$\cK_{\cV}$-equivalence of the germs $f_0$, which is a form of $\cK$-equivalence which preserves $\cV$, see e.g. \cite{D5} or \cite{D2}.  This equivalence applied to an $f_0 : \C^n, 0 \to \C^N, 0$ can be viewed as the action on the section $graph(f_0) : \C^n, 0 \to \C^n \times\C^N, 0$ of the trivial vector bundle on $\C^n$ with fiber $\C^N$.  It acts via diffeomorphisms of the fibers preserving each copy of $\cV$ and which holomorphically varies pointwise on $\C^n, 0$ composed with a local diffeomorphism of $\C^n, 0$.  As such it is a type of gauge group.  \par 
We also consider the defining equation $H : \C^N, 0 \to \C, 0$ for $\cV$.  There is a stronger $\cK_H$-equivalence within $\cK_V$-equivalence (see \cite{DM} and \cite{D1}) where the diffeomorphisms of $\C^n \times \C^N, (0, 0)$ preserve the defining map germ $H \circ pr_2 : \C^n \times \C^N, (0, 0) \to \C, 0$ for $\C^n \times \cV, (0, 0)$, where $pr_2$ denotes projection onto the second factor $\C^N, 0$.  These diffeomorphisms not only preserve $\C^n \times \cV$, but also $\C^n \times F$ for $F$ a Milnor fiber of $\cV$. \par
We further consider a subcategory of singularities of type $\cV$ where the germ $f_0$ is transverse to $\cV$ on the complement of $0$ in $\C^n$.  Transversality can be either in a geometric sense of transversality to the canonical Whitney stratification of $\cV$ or in an algebraic sense using the module of logarithmic vector fields (see \cite{D1}) and these agree if $\cV$ is holonomic in the sense of Saito \cite{Sa}.  In these cases the corresponding germ is finitely $\cK_{\cV}$-determined.  These singularities and the corresponding morphisms between them give a subcategory of \lq\lq finitely determined singularities of type $\cV$\rq\rq.  
 \par 
In analyzing the topology of such singularities $\cV_0$ there are three contributions:
\flushpar
\begin{itemize}
\item[a)]  the contribution from the topology of the germ $f_0$ and its 
geometric interaction with $\cV$;
\item[b)] the contribution from the topology of $\cV$;
\item[c)]  the interaction between these two contributions combining to give the 
topology of $\cV_0$.
\end{itemize}
\par  
For a), there have been results introduced for discriminants of finitely determined mappings and more generally finitely determined nonlinear sections of free divisors and complete intersections in \cite{DM} and \cite{D1}, and of the varieties of $m \times m$ matrices in \cite{GM} and \cite{DP3}, using a stabilization of the mapping to obtain a \lq\lq singular Milnor fiber\rq\rq\, homotopy equivalent to a bouquet of spheres, with the number of such spheres computed algebraically.  However, this provides no information about b).  The characteristic cohomology which we will introduce will specifically address b) and provide complementary information to that given for a).  We briefly indicate in \S \ref{S:sec7} how these two contributions combine for c).  \par
\subsection*{Characteristic Cohomology on the Category of Singularities of Type $\cV$} \hfill 
\par
We begin with the definition for the Milnor fiber in the case $\cV, 0$ is a hypersurface singularity.  
\subsubsection*{Characteristic Cohomology $\cA_{\cV}(f_0, R)$} \hfill 
\par
Let $f_0 : \C^n, 0 \to \C^N, 0$ define the singularity $\cV_0$.  For $\cV$ there exists $0 < \eta << \gd$ such that for balls $B_{\eta} \subset \C$ and $B_{\gd} \subset \C^N$  (with all balls centered $0$), we let
 $\cF_{\gd} = H^{-1}(B_{\eta}^*) \cap B_{\gd}$ 
so $H : \cF_{\gd} \to B_{\eta}^*$ is the Milnor fibration of $H$, with Milnor fiber 
$F_w = H^{-1}(w) \cap B_{\gd}$ for each $w \in B_{\eta}^*$.  By continuity, 
there is an $\gevar > 0$ so that $f_0(B_{\gevar}) \subset 
\cF_{\gd}$.  By possibly shrinking all three values, 
$H \circ f_0 : f_0^{-1}(\cF_{\gd}) \cap B_{\gevar} \to B_{\eta}^*$ is the 
Milnor fibration of $H \circ f_0$.  
Then, the Milnor fiber of $H \circ f_0$ for $w \in B_{\eta}^*$ is given by 
$$\cV_w \,\, = \,\, (H \circ f_0)^{-1}(w) \cap B_{\gevar} \,\, = \,\, f_0^{-1}(F_w) \cap
 B_{\gevar}\, .$$  
\par
Thus, if we denote $f_0 | \cV_w = f_{0, w}$, then in cohomology with 
coefficient ring $R$, $f_{0, w}^* : H^*(F_w ; R) \to H^*(\cV_w ; R)$.  
we let
\begin{equation}
\label{Eqn1.3}
\cA_{\cV}(f_0; R) \,\, \overset{def}{=} \,\, f_{0, w}^* (H^*(F_w ; R))\, ,
\end{equation}
We formally define the characteristic cohomology of the Milnor fiber.
\begin{Definition}
\label{Def1.1}
Let $f_0 : \C^n, 0 \to \C^N, 0$ define $\cV_0 = f_0^{-1}(\cV)$.  We define the {\em characteristic cohomology subalgebra of the Milnor fiber} of $\cV_0$, to be cohomology subalgebra of the Milnor fiber $H^*(\cV_w ; R)$ of $\cV_0$ given by \eqref{Eqn1.3}.
\end{Definition}  
\subsubsection*{Independence of $\cA_{\cV}(f_0, R)$ on the Milnor Fiber under Cohomology Isomorphism} \hfill 
\par
Given another $w^{\prime} \in B_{\eta}^*$, let $\gg(t)$ denote a simple path in $B_{\eta}^*$ from $w$ to $w^{\prime}$.  We may first lift $\gg(t)$ to an isotopy $\Phi_t : F_w \to F_{\gg(t)}$ of the restriction of the Milnor fibration from $F_w$ to 
$F_{w^{\prime}}$.  We can also lift $\gg(t)$ to an isotopy 
$\Psi_t : \cV_w \to \cV_{\gg(t)}$ of the restriction of the Milnor fibration from $\cV_w$ to 
$\cV_{w^{\prime}}$.  Then, $\Phi_t^{-1} \circ f_0 \circ \Psi_t : \cV_w \to F_w$ defines a homotopy from $f_{0, w}$ to $\Phi_1^{-1} \circ f_{0, w^{\prime}} \circ \Psi_1$.  Thus, $f_{0, w}^* = \Psi_1^* \circ f_{0, w^{\prime}}^* \circ \Phi_1^{*\, -1}$.  Then, 
$\Phi_1^{*\, -1} : H^*(F_{w^{\prime}}; R) \simeq H^*(F_w; R)$, and 
$\Psi_1^* : H^*(\cV_{w^{\prime}}; R) \simeq H^*(\cV_w; R)$.  Hence, $f_{0, w^{\prime}}^*(H^*(\cV_{w^{\prime}}; R))$ is mapped under the cohomology algebra isomorphism $\Psi_1^*$ to $f_{0, w}^*(H^*(\cV_w; R))$.  Thus, $\Psi_1^*$ maps the characteristic cohomology for the Milnor fiber of $\cV_w$ to that of $\cV_{w^{\prime}}$.  
\par
We also remark that if we consider a second set of values $0 < \eta^{\prime} < \eta$, $0 < \gd^{\prime} < \gd$, and $0 < \gevar^{\prime} < \gevar$ for the Milnor fibers of $H$ and $H\circ f_0$, and choose $w \in B_{\eta^{\prime}}^*$ so that the Minor fiber 
$\cV_w^{\prime}$ is transverse to the spheres $S^{2n-1}_{\gevar^{\prime\prime}}$ for 
$\gevar^{\prime} < \gevar^{\prime\prime} < \gevar$, then $\iti_w : \cV_w^{\prime} \subset \cV_w$  is a homotopy equivalence so the characteristic cohomology for 
$\cV_w^{\prime}$ is mapped isomorphically to that of $\cV_w$.  Hence, the characteristic cohomology is well-defined independent of the Milnor fiber up to Milnor fiber cohomology isomorphism.  When we want to refer to the characteristic cohomology at more than one point $w \in B^*_{\eta}$, we use the notation $\cA(f_0, R)_{w}$ to denote the representative in the Milnor fiber cohomology $H^*(\cV_w; R)$. 
\begin{Remark}
\label{Rem1.1}
We consider two consequences of the above arguments.  First, if we choose a convex neighborhood $w \in U \subset B_{\eta}^*$, then as the paths in $U$ between $w$ and any other $w^{\prime}$ are homotopic, it follows that the induced diffeomorphisms between the Milnor fibers $\cV_w$ and $\cV_{w^{\prime}}$, resp. $F_w$ and $F_{w^{\prime}}$, are homotopic so the algebra isomorphisms between the cohomology of the Milnor fibers over $U$ is well-defined.  This gives a local trivialization of the unions $\cup_{w^{\prime} \in U} \cA(f_0, R)_{w^{\prime}}$, resp. $\cup_{w^{\prime} \in U} H^*(\cV_{w^{\prime}}; R)$.  On overlaps of two such neighborhoods the transition isomorphisms are constant.  Together they give a locally constant system on 
$B_{\eta}^*$.  Second, if $\gg(t)$ is a simple loop in $B_{\eta}^*$ from $w$ around $0$, then the preceding arguments show the monodromy will map the characteristic cohomology to itself.  Thus, the characteristic cohomology inherits two properties from the Milnor fiber cohomology.  In this paper we will not attempt to make use of these additional properties.  
\end{Remark}
\subsubsection*{Characteristic Cohomology $\cC_{\cV}(f_0, R)$} \hfill 
\par
We next introduce the characteristic cohomology of the complement of $\cV_0$ in the case where $\cV, 0$ need not be a hypersurface singularity.  This proceeds somewhat analogously to the case of Milnor fibers.  Let $f_0 : \C^n, 0 \to \C^N, 0$ 
define $\cV_0 = f_0^{-1}(\cV)$.  Then, we consider a representative $\tilde{f}_0 : U \to W$ for which $\cV$ also has a representative on $W$, and we still denote the representative by $\cV$.  Then, $\tilde{f}_0^{-1}(\cV)$ is a representative for $\cV_0$ which we still denote by $\cV_0$.  Then, by stratification theory (see e.g. Mather \cite{M1}, \cite{M2} or Gibson et al \cite{GDW}), there are $0 < \gd_0, \gevar_0$ so that for $0 < \gd^{\prime} < \gd \leq \gd_0$ and $0 < \gevar^{\prime} < \gevar \leq \gevar_0$:
\begin{itemize}
\item[i)]  $\overline{B}_{\gd_0} \subset W$ and $\overline{B}_{\gevar_0} \subset U$,
\item[ii)]  $\partial\overline{B}_{\gd}$ is transverse to $\cV$ and 
$\partial\overline{B}_{\gevar}$ is transverse to $\cV_0$.
\item[iii)]  $\cV_0 \cap \overline{B}_{\gevar^{\prime}}$ is ambiently homeomorphic to the cone on $\cV_0 \cap \partial\overline{B}_{\gevar^{\prime}}$, as is 
$\cV \cap \overline{B}_{\gd^{\prime}}$ ambiently homeomorphic to the cone on 
$\cV \cap \partial\overline{B}_{\gd^{\prime}}$, and 
\item[iv)]  the inclusions of pairs 
$$(\overline{B}_{\gevar^{\prime}}, \cV_0 \cap \overline{B}_{\gevar^{\prime}}) \hookrightarrow (\overline{B}_{\gevar}, \cV_0 \cap \overline{B}_{\gevar}) $$
and
$$ (\overline{B}_{\gd^{\prime}}, \cV \cap \overline{B}_{\gd^{\prime}}) \hookrightarrow (\overline{B}_{\gd}, \cV \cap \overline{B}_{\gd})\, , $$
are homotopy equivalences.
\end{itemize}
Thus, if  
$f_0(\overline{B}_{\gevar^{\prime}}) \subset B_{\gd^{\prime}}$,  and $f_0(\overline{B}_{\gevar}) \subset B_{\gd}$, then there is the commutative diagram  
\begin{equation}
\label{CD1.5}
\begin{CD}
   {H^*(\overline{B}_{\gd}\backslash \cV ;R)} @> {f_0^*} >> {H^*(\overline{B}_{\gevar} \backslash \cV_0 ; R)}\\
   @V{\simeq}VV @V {\simeq}VV \\
H^*(\overline{B}_{\gd^{\prime}}\backslash \cV ;R) @>{f_0^*}>> H^*(\overline{B}_{\gevar^{\prime}} \backslash \cV_0 ; R)  \\
\end{CD} 
\end{equation}
and the vertical maps are isomorphisms by property (iv).  Thus, via the vertical isomorphisms, the induced homomorphisms 
$f_0^* : H^*(\overline{B}_{\gd}\backslash \cV ;R) \to H^*(\overline{B}_{\gevar} \backslash \cV_0 ; R)$ 
are independent of 
$0 < \gevar < \gevar_0$ and $0 < \gd < \gd_0$.  
Hence, the induced isomorphisms 
$f_0^*(H^*(\overline{B}_{\gd}\backslash \cV ;R)) \simeq f_0^*(H^*(\overline{B}_{\gd^{\prime}}\backslash \cV ;R))$ 
yield an inverse system with limit isomorphic to each of these groups, giving a 
well-defined cohomology subalgebra.  
\begin{Definition}
\label{Def1.2}
Let $f_0 : \C^n, 0 \to \C^N, 0$ define $\cV_0 = f_0^{-1}(\cV)$.  We define the {\em characteristic cohomology (algebra) of the complement} of $\cV_0$, to be cohomology subalgebra which is the direct limit 
$$  \cC_{\cV}(f_0, R) \,\, \overset{def}{=} \,\, \underset{\rightarrow}{\lim} f_0^*(H^*(\overline{B}_{\gd}\backslash \cV ; R))\, .$$
\end{Definition}
We note that this cohomology is really in local cohomology of the complement, but it is given by the complement in sufficient small neighborhoods. 
\par
Just as for complements, singularities $\cV_0$ of type $\cV$ also have characteristic cohomology for the link. \par 
\subsubsection*{Characteristic Cohomology $\cB_{\cV}(f_0, R)$} \hfill 
\par
We use the same notation as above for the complement where again $\cV, 0$ need not be a hypersurface singularity.  In this case, we consider $R = \bk$, a field of characteristic $0$. 
By the conical structure for the pair $(\overline{B}_{\gevar}, \overline{B}_{\gevar} \cap \cV_0)$, it follows that the inclusion $j_{\gevar} : (S^{2n-1}_{\gevar} \backslash \cV_0) \subset \overline{B}_{\gevar} \backslash \cV_0$ is a homotopy equivalence.  Thus, $j_{\gevar}^*: H^*(\overline{B}_{\gevar} \backslash \cV_0 ; \bk) \simeq H^*(S^{2n-1}_{\gevar} \backslash \cV_0 ; \bk)$ is an isomorphism.  
\par 
For each $0 < \gevar \leq \gevar_0$, there is the Kronecker dual graded subgroup of 
$$  j_{\gevar}^* \circ f_0^*(H^*(\overline{B}_{\gd}\backslash \cV ;\bk)) \,\, \subset \,\,  H^*(S^{2n-1}_{\gevar} \backslash \cV_0 ; \bk)\, , $$
 which we denote by $\gG_{\cV}(f_0; \bk) \subset H_*(S^{2n-1}_{\gevar} \backslash \cV_0 ; \bk)$.  We note that for the Kronecker pairing we may choose a dual basis for $H_*(\overline{B}_{\gevar} \backslash \cV_0 ; \bk)$  that extends a basis for 
$j_{\gevar}^* \circ f_0^*(H^*(\overline{B}_{\gd}\backslash \cV ;R))$, so it is dually paired to $\gG_{\cV}(f_0; \bk)$.  

Then, we can apply a form of Alexander duality for subspaces of spheres, \cite[Chap. XIV, Thm 6.6]{Ma} or see e.g. \cite[Prop. 1.9]{D3}.  For $L(\cV_0) = S^{2n-1}_{\gevar} \cap \cV_0$, the link of $\cV_0$,
\begin{equation}
\label{Eqn1.6} 
\ga :  \widetilde{H}^j(L(\cV_0) ; \bk) \simeq  \widetilde{H}_{2n-2-j}(S^{2n-1}_{\gevar} \backslash L(\cV_0); \bk)  \qquad \text{for all j} 
\end{equation}
Then, if $\widetilde{\gG}_{\cV}(f_0; \bk)$ denotes the corresponding reduced homology obtained by removing $H_0$ from $\gG_{\cV}(f_0; \bk)$, then we define the characteristic cohomology for the link.
\begin{Definition}
\label{Def1.3}
Let $f_0 : \C^n, 0 \to M, 0$ define $\cV_0 = f_0^{-1}(\cV)$.  We define the {\em characteristic cohomology of the link} of $\cV_0$, to be
\begin{equation}
\label{Eqn1.7}
  \cB_{\cV}(f_0; \bk) \,\, \overset{def}{=} \,\, \ga^{-1}(\widetilde{\gG}_{\cV}(f_0); \bk)
\end{equation}
\end{Definition}
Since the definition in \eqref{Eqn1.7} is independent, up to isomorphism, of $\gevar$, this gives a well-defined graded cohomology subgroup in the cohomology of the link.  However, because of the use of Alexander duality, this is not a subalgebra as is the case for the Milnor fiber and the complement.  Also, the actual subgroup does depend upon the choice of basis for the Kronecker pairing; however, we still obtain subspaces in each degree whose dimensions are independent of choices.  
\begin{Remark}
\label{Rem1.8}
On first glance it might seem that it would be more natural to define the characteristic cohomology of the link to be 
$$ \cL_{\cV}(f_0, R) \,\, \overset{def}{=} \,\, \underset{\rightarrow}{\lim} f_0^*(H^*(\overline{B}_{\gd}\cap \cV ; R))\, .$$
However, we shall see in Part II \cite{D6} that this subgroup of the cohomology of the link does not capture the directly identifiable cohomology in $H^*(L(\cV_0), R)$.  Specifically this cohomology will lie above the middle dimension, while theorems such as the 
Le-Hamm Local Lefschetz Theorem, see e.g. \cite{HL} or 
\cite[Part 2, \S 1.2, Thm 1]{GMc}, when they are applicable only concern dimensions below the middle dimension.   
\end{Remark}
\par
\subsection*{Functoriality of Characteristic Cohomology $\cA_{\cV}(f_0, R)$ and 
$\cC_{\cV}(f_0, R)$} \hfill 
\par
We complete this section by establishing the functoriality of both $\cA_{\cV}(f_0, R)$ and $\cC_{\cV}(f_0, R)$ on the category of singularities of type $\cV$.
\begin{Lemma}
\label{Lem1.10}
Given $f_0 : \C^n, 0 \to \C^N, 0$ defining $\cV, 0$ and $g_0 : \C^s, 0 \to \C^N, 0$ defining $\cW, 0$ both of type $\cV, 0 \subset \C^N, 0$ with a morphism $\varphi : \cW_0, 0 \to \cV_0, 0$ defined by $\tilde{\varphi} : \C^s, 0 \to \C^n, 0$.  Then, $\varphi$ induces the algebra homomorphisms  
$\tilde{\varphi}^* : \cA_{\cV}(f_0, R) \to \cA_{\cV}(g_0, R)$ and 
$ \tilde{\varphi}^* : \cC_{\cV}(f_0, R) \to \cC_{\cV}(g_0, R)$.  Moreover, both $\cA_{\cV}(f_0, R)$ and $\cC_{\cV}(f_0, R)$ are functorial.
\end{Lemma}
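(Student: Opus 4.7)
The plan is to exploit the defining relation $g_0 = f_0 \circ \tilde{\varphi}$ together with the contravariance of cohomology pullback; the only real work is a compatible choice of Milnor data (for $\cA_{\cV}$) and of stratification data (for $\cC_{\cV}$) on the two source spaces $\C^n$ and $\C^s$.

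For $\cA_{\cV}$, I first fix Milnor data $(\eta, \gd)$ for the defining equation $H$ of $\cV$ together with $\gevar > 0$ giving the Milnor fibration of $H \circ f_0$ on $B_{\gevar}$.  Using continuity of $\tilde{\varphi}$ at $0$, I then shrink $\gevar' > 0$ so that simultaneously $\tilde{\varphi}(B_{\gevar'}) \subset B_{\gevar}$ and $H \circ g_0 = H \circ f_0 \circ \tilde{\varphi}$ restricts to its Milnor fibration on $B_{\gevar'}$.  For $w \in B_{\eta}^*$, $\tilde{\varphi}$ then restricts to $\tilde{\varphi}_w : \cW_w \to \cV_w$ satisfying $f_{0,w} \circ \tilde{\varphi}_w = g_{0,w}$.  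Pulling back and using contravariance,
$$\tilde{\varphi}_w^*(\cA_{\cV}(f_0; R)) \,=\, \tilde{\varphi}_w^* \circ f_{0,w}^*(H^*(F_w; R)) \,=\, g_{0,w}^*(H^*(F_w; R)) \,=\, \cA_{\cV}(g_0; R),$$
so $\tilde{\varphi}_w^*$ restricts to a (surjective) algebra homomorphism $\cA_{\cV}(f_0; R) \to \cA_{\cV}(g_0; R)$.  The analogous argument for $\cC_{\cV}$ uses property (iv) to choose $\gd, \gevar, \gevar'$ compatibly with $f_0(\overline{B}_{\gevar}) \subset B_{\gd}$ and $\tilde{\varphi}(\overline{B}_{\gevar'}) \subset \overline{B}_{\gevar}$; then $\tilde{\varphi}$ restricts to $\overline{B}_{\gevar'} \backslash \cW_0 \to \overline{B}_{\gevar} \backslash \cV_0$, the identity $g_0^* = \tilde{\varphi}^* \circ f_0^*$ gives the desired containment at each finite stage, and passing to the direct limit yields $\tilde{\varphi}^* : \cC_{\cV}(f_0; R) \to \cC_{\cV}(g_0; R)$.

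Functoriality then comes for free from the contravariance of cohomology: for a composable pair $\cU_0 \overset{\psi}{\to} \cW_0 \overset{\varphi}{\to} \cV_0$ with defining maps $\tilde{\psi}, \tilde{\varphi}$, the composite corresponds to $\tilde{\varphi} \circ \tilde{\psi}$ (after choosing the source data in a compatible chain of nested neighborhoods), and $(\tilde{\varphi} \circ \tilde{\psi})^* = \tilde{\psi}^* \circ \tilde{\varphi}^*$ with identity morphisms inducing the identity on the characteristic subalgebras.

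The main obstacle is verifying that the induced maps $\tilde{\varphi}^*$ do not depend on the compatible choice of data; however, this fits neatly into the framework already established in Remark 1.1 and in the direct-limit definition of $\cC_{\cV}$.  Concretely, two valid choices of $(\gevar, \gevar')$ are connected by a path in their parameter space, and lifting this path via the Milnor fibration (respectively, via the stratified homotopy equivalences of property (iv)) produces commuting squares of cohomology isomorphisms that identify the two candidate homomorphisms, so the restricted maps on the characteristic subalgebras are well defined up to the identifications already in force.
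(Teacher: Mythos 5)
Your proposal is correct and follows essentially the same route as the paper's proof: choose nested neighborhoods so that the Milnor fibrations (respectively the conical/complement data) are compatible, use $g_0 = f_0 \circ \tilde{\varphi}$ and contravariance to get $g_{0,w}^* = \tilde{\varphi}_w^* \circ f_{0,w}^*$, and pass to the direct limit for $\cC_{\cV}$. Your explicit observation that $\tilde{\varphi}_w^*$ maps $\cA_{\cV}(f_0;R)$ \emph{onto} $\cA_{\cV}(g_0;R)$, and your remarks on independence of the choices, are consistent with (and slightly more explicit than) what the paper records.
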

\par 
Then, we shall let $\varphi^* : \cA_{\cV}(f_0, R) \to \cA_{\cV}(g_0, R)$ and 
$\varphi^* : \cC_{\cV}(f_0, R) \to \cC_{\cV}(g_0, R)$ denote the induced algebra homomorphisms defined by $\tilde{\varphi}^*$.  
\begin{proof}
\par
We begin by showing that $\tilde{\varphi}^*$ gives a well-defined homomorphism between the algebras in each case.  We consider $0 < \eta << \gevar_2, \gevar_1, \gd$ so that:
\begin{itemize}
\item[i)]  $\varphi(B_{\gevar_2}) \subset B_{\gevar_1}$, $f_0(B_{\gevar_1}) \subset B_{\gd}$, and $H(B_{\gd}) \subset B_{\eta}$; and
\item[ii)]  $H : H^{-1}(B_{\eta}^* ) \cap B_{\gd} \to B_{\eta}^*$ is the Milnor fibration for $H$; $H\circ f_0 : (H\circ f_0)^{-1}(B_{\eta}^*) \cap B_{\gevar_1} \cap  \to B_{\eta}^*$ is the Milnor fibration for 
$H\circ f_0$; and $H\circ f_0 \circ \tilde{\varphi} : (H \circ f_0 \circ \tilde{\varphi})^{-1}(B_{\eta}^*) \cap B_{\gevar_2} \cap  \to B_{\eta}^*$ is the Milnor fibration for $H\circ f_0\circ \tilde{\varphi} = H\circ g_0$.  
\end{itemize}
Then, for $w \in B_{\eta}^*$ we have the induced maps for the cohomology of the Milnor fibers
\begin{equation}
\label{Eqn1.11}
H^*(F_w; R) \overset{f_{0\, w}^*}{\longrightarrow} H^*(\cV_w; R) \overset{\tilde{\varphi}_{w}^*}{\longrightarrow} H^*(\cS_w; R) \, . 
\end{equation}
Then the composition in \eqref{Eqn1.11} is 
\begin{equation}
\label{Eqn1.12}
H^*(F_w; R) \overset{\tilde{\varphi}_{w}^* \circ f_{0\, w}^*}{\longrightarrow}  H^*(\cS_w; R) \, . 
\end{equation}
The image of this composition in \eqref{Eqn1.12} defines $\cA_{\cV}(g_0, R)$ and factors through \eqref{Eqn1.13}. 
Hence, $\tilde{\varphi}_{w}^*$ induces a well-defined map $\tilde{\varphi}^* : \cA_{\cV}(f_0, R) \to \cA_{\cV}(g_0, R)$.  
\begin{equation}
\label{Eqn1.13}
H^*(F_w; R) \overset{f_{0\, w}^*}{\longrightarrow}  H^*(\cV_w; R) \, . 
\end{equation}
\par 
For functoriality, we include a third singularity $\cZ_0$ of type $\cV$ given by $h_0 : \C^r, 0 \to \C^N, 0$ such that there is a map germ $\tilde{\psi} : \C^r, 0 \to \C^s, 0$ so that $g_0 \circ \tilde{\psi} = h_0$.  Then, choosing an additional $0 < \eta << \gevar_3$ so that $\tilde{\psi}(B_{\gevar_3}) \subset B_{\gevar_2}$, and $H\circ f_0 \circ \tilde{\varphi} \circ \tilde{\psi} : (H \circ f_0 \circ \tilde{\varphi} \circ \tilde{\psi})^{-1}(B_{\eta}^*) \cap B_{\gevar_3} \cap  \to B_{\eta}^*$ is the Milnor fibration for $H\circ f_0\circ \tilde{\varphi} \circ \tilde{\psi} = H\circ h_0$.
Then, by functoriality in cohomology, $\tilde{\psi}$ maps the image in \eqref{Eqn1.12} to 
$H^*(\cZ_w; R)$, for $\cZ_w$ the Milnor fiber of $H \circ h_0$ over $w$, and 
$(\tilde{\varphi} \circ \tilde{\psi})^* = \tilde{\psi}^*\circ \tilde{\varphi}^*$.  Hence, using our notation for the induced maps on characteristic cohomology,  $(\varphi \circ \psi)^* = \psi^*\circ \varphi^*$.  \par
For $\cC_{\cV}(f_0, R)$, $\cC_{\cV}(g_0, R)$, and $\cC_{\cV}(h_0, R)$, the proof is similar, except we replace the Milnor fibers by the complements $B_{\gd} \backslash \cV$, resp. $B_{\gevar_1} \backslash \cV_0$, resp. $B_{\gevar_2} \backslash \cW_0$, resp. $B_{\gevar_3} \backslash \cZ_0$ and consider the induced maps in cohomology of these complements by $f_0^*$, resp. $\tilde{\varphi}^*$, resp.  $\tilde{\psi}^*$ and their compositions. 
\end{proof}
One immediate consequence of functoriality is the detection of the nonvanishing characteristic cohomology.  We note for the identity map $id : \C^N, 0 \to \C^N, 0$, 
$\cA_{\cV}(id, R)_w = H^*(F_w; R)$.  With the above notation for a morphism $\varphi : \cW_0, 0 \to \cV_0, 0$ defined by $\widetilde{\varphi} : \C^p, 0 \to \C^n, 0$ with $\cV_0, 0$ defined by $f_0 : \C^n, 0 \to \C^N, 0$ and $\cW_0, 0$ defined by $g_0 : \C^p, 0 \to \C^N, 0$.  Then, we have the corollary.
\begin{Corollary}
\label{Cor1.14}
If $g_0^* : \cA_{\cV}(id, R) \to \cA_{\cV}(g_0, R)$ induces an isomorphism from a graded subgroup $E \subset \cA_{\cV}(id, R)$ to a subgroup of $\cA_{\cV}(g_0, R)$, then $f_0^*$ induces an isomorphism from $E$ to a subgroup of $\cA_{\cV}(f_0, R)$.  \par
There is an analogous result for $\cC_{\cV}(f_0, R)$ and the complement. 
\end{Corollary}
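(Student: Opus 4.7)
The plan is to deduce the corollary directly from the functoriality proved in Lemma 1.10, by factoring the map from $\cA_{\cV}(id,R)$ through $\cA_{\cV}(f_0,R)$. First I would observe that the identity morphism on $\C^N,0$ defines the singularity $\cV$ itself as a singularity of type $\cV$, and that there is a tautological morphism $\cV_0\to\cV$ (resp.\ $\cW_0\to\cV$) induced by $f_0$ (resp.\ $g_0$), and a morphism $\cW_0\to\cV_0$ induced by $\widetilde{\varphi}$, with $g_0 = f_0\circ\widetilde{\varphi}$. By Lemma \ref{Lem1.10}, these morphisms induce algebra homomorphisms on characteristic cohomology, and in the commutative triangle
\begin{equation*}
\cA_{\cV}(id,R) \xrightarrow{\ f_0^*\ } \cA_{\cV}(f_0,R) \xrightarrow{\ \widetilde{\varphi}^*\ } \cA_{\cV}(g_0,R),
\end{equation*}
the composition is precisely $g_0^* = (f_0\circ\widetilde{\varphi})^* = \widetilde{\varphi}^*\circ f_0^*$ (after identifying all three characteristic subalgebras with the images of the appropriate pullbacks of $H^*(F_w;R)$ inside the Milnor fiber cohomologies of $\cV_0$ and $\cW_0$, which is exactly the setup inside \eqref{Eqn1.11}--\eqref{Eqn1.13}).

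Next I would run the contrapositive: suppose $\xi \in E$ lies in the kernel of $f_0^*|_E$. Then
\begin{equation*}
g_0^*(\xi) \,=\, \widetilde{\varphi}^*(f_0^*(\xi)) \,=\, \widetilde{\varphi}^*(0) \,=\, 0,
\end{equation*}
so $\xi$ lies in the kernel of $g_0^*|_E$. By hypothesis $g_0^*|_E$ is an isomorphism onto its image, so $\xi=0$. Hence $f_0^*|_E$ is injective, and therefore induces an isomorphism from $E$ onto $f_0^*(E)\subset\cA_{\cV}(f_0,R)$, proving the first assertion.

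For the analogous statement for $\cC_{\cV}$, I would repeat the argument verbatim, replacing the Milnor fibers by the local complements $B_{\gd}\setminus\cV$, $B_{\gevar_1}\setminus\cV_0$, $B_{\gevar_2}\setminus\cW_0$, and using the second half of the proof of Lemma \ref{Lem1.10} together with the commutative diagram \eqref{CD1.5}; the same factorization $g_0^*=\widetilde{\varphi}^*\circ f_0^*$ holds on the complement cohomologies in the direct limit, and the same contrapositive argument applies. There is no real obstacle here, since all the technical content (the existence of compatible representatives in which the Milnor fibrations and complements fit together, and the fact that $\tilde{\psi}^*\circ\tilde{\varphi}^* = (\tilde{\varphi}\circ\tilde{\psi})^*$) was established in Lemma \ref{Lem1.10}; the only thing to be careful about is to identify $E\subset \cA_{\cV}(id,R)=H^*(F_w;R)$ and the corresponding subspace in the complement case with their images under the canonical projection to the characteristic subalgebra, so that the statement "$g_0^*|_E$ is an isomorphism onto its image" is meaningful as a statement about restriction of algebra homomorphisms.
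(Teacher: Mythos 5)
Your proof is correct and follows essentially the same route as the paper: both use the functoriality from Lemma \ref{Lem1.10} to factor $g_0^* = \widetilde{\varphi}^*\circ f_0^*$ and then observe that injectivity of the composite on $E$ forces injectivity of $f_0^*$ on $E$. Your version merely spells out the injectivity step (via the contrapositive) that the paper states in one sentence, and handles the complement case the same way the paper does.
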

\begin{proof}
By functoriality, we have for the sequence 
$$ \cA_{\cV}(id, R)  \overset{f_0^*}{\longrightarrow} \cA_{\cV}(g_0, R) \overset{\varphi^*}{\longrightarrow} \cA_{\cV}(g_0, R) $$
the composition is $\varphi^*\circ f_0^* = g_0^*$.  As $g_0^*$ maps $E$ isomorphically to its image, so must $f_0^*$ map $E$ isomorphically to its image. 
\end{proof}
\par
We will see how we can apply this idea in \S \ref{S:sec3.a} for detecting nonvanishing subalgebras or subgroups of characteristic cohomology, with examples in \S \ref{S:sec7} and with more complete applications for matrix singularities in Part II of this paper.
\begin{Remark}
\label{Rem1.13}
Although $\cB_{\cV}(f_0, R)$ is not functorial, it does satisfy a relation involving a type of relative Gysin homomorphism, where in place of Poincare duality, Alexander duality is used because the links are not manifolds.  For a morphism $\varphi : \cW_0, 0 \to \cV_0, 0$ defined by $\tilde{\varphi}$ we have a map for sufficiently small $0 < \eta << \gevar_2, \gevar_1$ so that $\tilde{\varphi}(\overline{B}_{\gevar_2}) \subset B_{\gevar_1}$ and 
$f_0(\overline{B}_{\gevar_1}) \subset B_{\gd}$.  Then, 
\begin{align}
\label{Eqn1.14}
 \widetilde{H}^j(S^{2s-1}_ {\gevar_2} \cap \cW_0 ; \bk) &\overset{\ga}{\simeq} \widetilde{H}_{2s-2 - j}(S^{2s-1}_ {\gevar_2} \backslash \cW_0; \bk) \overset{j_{\gevar_2\, *}}{\longrightarrow}  \widetilde{H}_{2s-2 - j}(B_ {\gevar_2} \backslash \cW_0; \bk) \overset{\tilde{\varphi}_*}{\longrightarrow}  \notag  \\ 
\widetilde{H}_{2s-2 - j}(B_ {\gevar_1} \backslash \cV_0; \bk) &\simeq \widetilde{H}_{2s-2 - j}(S^{2n-1}_ {\gevar_1} \backslash \cV_0; \bk) \overset{\ga^{-1}}{\simeq}  \widetilde{H}^{2(n-s) + j}(S^{2n-2}_ {\gevar_1} \cap \cV_0 ; \bk)  
\end{align}


The composition in \eqref{Eqn1.14} yields
$\widetilde{H}^j(S^{2s-1}_ {\gevar_2} \cap \cW_0 ; \bk) \to \widetilde{H}^{2(n-s) + j}(S^{2n-2}_ {\gevar_1} \cap \cV_0 ; \bk)$.  
Then, via the identification for different $\gevar_i$, we obtain a form of {\em relative Gysin homomorphism} 
\begin{equation}
\label{Eqn1.15}
\varphi_* :  \widetilde{H}^j(L(\cW_0) ; \bk) \longrightarrow \widetilde{H}^{2(n-s) + j}(L(\cV_0) ; \bk)\, .  
\end{equation}
Also, by choosing consistent bases for the cohomology, this will induce a 
Gysin-type homomorphism
$\cB_{\cW}(g_0 ; \bk) \to \cB_{\cV}(f_0 ; \bk)$, which shifts degrees by $2(n-s)$.
  
\end{Remark}
\section{$\cK_H$ and $\cK_{\cV}$ Invariance of Characteristic Cohomology} 
\label{S:sec2a}
\par
We next turn to the invariance properties of the characteristic cohomology.
\subsection*{Invariance of Characteristic Cohomolgy $\cA_{\cV}(f_0; R)$ under 
$\cK_H$ Equivalence} \hfill 
\par 
\par
The dependence of $\cA_{\cV}(f_0; R)$ on $f_0$ is clarified by the next proposition.  
\begin{Proposition}
\label{Prop2.4a}
Suppose $f_i : \C^n, 0  \to \C^N, 0$, $i = 1, 2$ are $\cK_H$--equivalent. Let $F_i$, $i = 1, 2$, denote the Milnor fibers of $H \circ f_i$ for a $w \in B_{\eta}^*$.  Then, for any coefficient ring $R$, there is a cohomology algebra isomorphism $\ga : H^*(F_1; R) \simeq H^*(F_2; R)$ such that 
$\ga(\cA_{\cV}(f_1; R)) = \cA_{\cV}(f_2; R)$.  \par 
Hence, the structure of the cohomology of the Milnor fiber of $H \circ f_0$ as a 
graded algebra (or graded module) over $\cA_{\cV}(f_1; R)$ is, up to isomorphism, independent of the $\cK_H$--equivalence class of $f_0$.
\end{Proposition}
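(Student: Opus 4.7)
My plan is to unpack the $\cK_H$-equivalence of $f_1$ and $f_2$ into a diffeomorphism germ $\Phi : \C^n \times \C^N, 0 \to \C^n \times \C^N, 0$ of the form $\Phi(x, y) = (\varphi(x), \Psi(x, y))$, where $\varphi : \C^n, 0 \to \C^n, 0$ is a diffeomorphism, the $\cK_H$ condition gives $H \circ \Psi(x, y) = H(y)$ for all $(x, y)$, and the graph relation gives $\Phi(x, f_1(x)) = (\varphi(x), f_2(\varphi(x)))$. Taking $H$ of the second coordinates of this last identity yields $H \circ f_2 \circ \varphi = H \circ f_1$, so for representatives with sufficiently small radii $\varphi$ restricts to a diffeomorphism $\varphi|_{F_1} : F_1 \to F_2$. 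Setting $\alpha := \bigl((\varphi|_{F_1})^*\bigr)^{-1}$ gives immediately a graded $R$-algebra isomorphism $\alpha : H^*(F_1; R) \simeq H^*(F_2; R)$.

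The substance of the argument is then to verify $\alpha\bigl(\cA_{\cV}(f_1; R)\bigr) = \cA_{\cV}(f_2; R)$, equivalently $(\varphi|_{F_1})^*\bigl(f_{2,w}^* H^*(F_w; R)\bigr) = f_{1,w}^* H^*(F_w; R)$. For this I introduce, for each sufficiently small $x \in \C^n$, the map $\psi_x : \C^N, 0 \to \C^N, 0$ given by $\psi_x(y) := \Psi(x, y)$. The identity $H \circ \Psi(x, y) = H(y)$ forces $H \circ \psi_x = H$, so each $\psi_x$ is a diffeomorphism germ preserving the Milnor fiber $F_w$ of $H$; in particular $\psi_0 : F_w \to F_w$ is a diffeomorphism and $\psi_0^*$ is an algebra automorphism of $H^*(F_w; R)$. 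Reading the second coordinate of $\Phi(x, f_1(x)) = (\varphi(x), f_2(\varphi(x)))$ gives the pointwise identity $f_2(\varphi(x)) = \psi_x(f_1(x))$, so the straight-line homotopy $G_t : F_1 \to F_w$ defined by $G_t(x) := \psi_{tx}(f_1(x))$ (which lands in $F_w$ because $H(G_t(x)) = H(f_1(x)) = w$) connects $G_0 = \psi_0 \circ (f_1|_{F_1})$ to $G_1 = (f_2 \circ \varphi)|_{F_1}$. Passing to cohomology gives $\bigl((f_2 \circ \varphi)|_{F_1}\bigr)^* = f_{1,w}^* \circ \psi_0^*$, and since $\psi_0^* H^*(F_w; R) = H^*(F_w; R)$, the desired equality $(\varphi|_{F_1})^*\bigl(\cA_{\cV}(f_2; R)\bigr) = \cA_{\cV}(f_1; R)$ follows. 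The final clause about module structures over the characteristic subalgebra is then formal, since $\alpha$ is an algebra isomorphism carrying one subalgebra onto the other.

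The main obstacle I expect is the bookkeeping of radii. One must choose $0 < \eta \ll \gd$ and $0 < \gevar_1, \gevar_2$ compatibly so that $\Phi$ is defined on the compact set $\{(tx, f_1(x)) : t \in [0,1],\ x \in \overline{B}_{\gevar_1}\}$, so that $\varphi(\overline{B}_{\gevar_1}) \subset \overline{B}_{\gevar_2}$, and so that all of $G_t(F_1)$ stays inside a single Milnor-fiber representative $F_w = H^{-1}(w) \cap B_{\gd}$ of $H$ independent of $t$. Once the choices of representatives from \S \ref{S:sec1} are fixed uniformly in $t$, the homotopy and algebraic conclusions above are essentially formal.
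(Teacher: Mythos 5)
Your overall strategy is the same as the paper's: unpack the $\cK_H$-equivalence into $\Phi(x,y)=(\varphi(x),\Psi(x,y))$ with $H\circ\Psi(x,y)=H(y)$ and $f_2(\varphi(x))=\Psi(x,f_1(x))$, and then use the fact that the fiber-coordinate diffeomorphisms preserve $H$ to compare the two pullback images inside $H^*(F_w;R)$. Your explicit homotopy $G_t(x)=\psi_{tx}(f_1(x))$ is a clean way to see that $(f_2\circ\varphi)|_{F_1}$ and $\psi_0\circ f_1|_{F_1}$ induce the same map up to the automorphism $\psi_0^*$, and that part is fine once the domains are under control.

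The genuine gap is the assertion that ``for representatives with sufficiently small radii $\varphi$ restricts to a diffeomorphism $\varphi|_{F_1}:F_1\to F_2$'' (and, in the same way, that $\psi_0:F_w\to F_w$ is a diffeomorphism). No choice of radii makes this true: $\varphi$ is only a germ of a diffeomorphism, so it carries the ball $B_{\gevar_1}$ to a distorted open neighborhood of $0$, never onto a round ball $B_{\gevar_2}$. Hence $\varphi(F_1)$ is a diffeomorphic copy of $F_1$ sitting as an open subset of the global fiber $(H\circ f_2)^{-1}(w)$, and it is \emph{not} a standard Milnor-fiber representative $(H\circ f_2)^{-1}(w)\cap B_{\gevar_2}$. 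Your closing remark treats this as ``bookkeeping of radii,'' but the issue is not that the radii must be chosen compatibly; it is that the map is never surjective onto a representative, so $\bigl((\varphi|_{F_1})^*\bigr)^{-1}$ is not defined until you prove that the relevant inclusions induce cohomology isomorphisms. This is exactly what the paper's proof supplies: it chooses four nested triples $(\eta_j,\gd_j,\gevar_j)$ so that the images $Y_j=\graph(f_2)(\cV^{(2,j)}_w)$ and $Z_j=\Phi(\graph(f_1)(\cV^{(1,j)}_w))$ alternate, $Z_4\subset Y_3\subset Z_2\subset Y_1$, and then uses the two-out-of-three argument (the composites $H^*(Y_1)\to H^*(Y_3)$ and $H^*(Z_2)\to H^*(Z_4)$ are isomorphisms because inclusions of Milnor-fiber representatives at different radii are homotopy equivalences) to conclude that $H^*(Z_2)\to H^*(Y_3)$ is an isomorphism. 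The same sandwich is needed a second time for $\psi_0$ acting on the Milnor fiber of $H$ itself. Your argument becomes correct once you replace the claimed diffeomorphism of representatives by this sandwiching step; without it, the definition of $\ga$ and the equality $\psi_0^*H^*(F_w;R)=H^*(F_w;R)$ both rest on an identification that has not been established.
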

\par.
\begin{proof}[Proof of Proposition \ref{Prop2.4a}]
\par 
By the $\cK_H$--equivalence of the germ $f_i : \C^n, 0  \to \C^N, 0$, there are representatives $f_i : U  \to W$, for open neighborhoods $U$ and $W$, and a diffeomorphism onto a subspace
\begin{align}
\label{Eqn2.4a}
\Phi : U^{\prime} \times W^{\prime} &\to U \times W  \\
(x, y)\qquad  &\mapsto \qquad (\varphi(x), \varphi_1(x, y)) \notag
\end{align}
sending $(0, 0) \mapsto (0, 0)$ such that $\Phi$ preserves $H\circ pr_2$ for $pr_2 : \C^n \times \C^N$ the projection onto the second factor, and so that 
$f_2(\varphi(x)) = \varphi_1(x, f_1(x))$ for all $x \in U^{\prime}$.  Thus, $\Phi(\graph(f_1)) = \graph(f_2) \cap Im(\Phi)$, and for any Milnor fiber $F_w$ of $H$, 
$\Phi(\C^n \times F_w) = (\C^n \times F_w) \cap Im(\Phi)$.  \par
We let $g_i = H\circ f_i$.  Next we choose $0 < \eta_1 << \gd_1 << \gevar_1$ so that \begin{itemize}
\item[i)] $B_{\gd_1} \subset W^{\prime} $;
\item[ii)] $B_{\gevar_1} \subset U^{\prime}$;
\item[iii)]  the Milnor fibration of $H$ is given by $H : H^{-1}(B_{\eta_1}^*) \cap B_{\gd_1} \to B_{\eta_1}^*$; and 
\item[iv)] the Milnor fibration of each $g_i$ is given by $g_i : g_i^{-1}(B_{\eta_1}^*) \cap B_{\gevar_1} \to B_{\eta_1}^*$.
\end{itemize}
\par 
Next, we begin to find a series of $(\eta_j, \gd_j, \gevar_j)$ so that:
\begin{itemize}
\item[1)] $0 < \eta_{j+1} < \eta_j$; $0 < \gd_{j+1} < \gd_j$, and $0 < \gevar_{j+1} < \gevar_j$ and 
\begin{equation}
\label{Eqn2.5a}
 g_i : g_i^{-1}(B_{\eta_{j+1}}^*) \cap B_{\gevar_{j+1}} \to B_{\eta_{j+1}}^* \quad \text{ and } \quad H : H^{-1}(B_{\eta_{j+1}}^*) \cap B_{\gd_{j+1}} \to B_{\eta_{j+1}}^* 
\end{equation} 
are Milnor fibrations for $g_i$, $i = 1, 2$, resp. $H$.  \par
\item[2)]  
\begin{equation}
\label{Eqn2.5b}
 T_{j+1} \,\, \overset{def}{=} \,\, \Phi(B_{\gevar_{j+1}}  \times B_{\gd_{j+1}}) \,\, \subset \,\, B_{\gevar_j} \times  B_{\gd_j} \, .
\end{equation}  
\item[3)] 
$(B_{\gevar_{j+1}}  \times B_{\gd_{j+1}}) \subset T_j$ (as $T_j$ is an open neighborhood of $(0, 0)$).   
\item[4)] If $\cV^{(i, j)}_w$ denotes the Milnor fiber of $g_i : g_i^{-1}(B_{\eta_j}^*) \cap B_{\gevar_j} \to B_{\eta_j}^*$, then for $w \in B_{\eta_{j+1}}^*$  the inclusions of Milnor fibers in \eqref{Eqn2.7b} are homotopy equivalences.
\begin{equation}
\label{Eqn2.7b}
\cV^{(i, j+1)}_w \,\, \subset \,\, \cV^{(i, j)}_w \, ;
\end{equation}  
\item[5)] We repeat these steps for $j = 1, \dots , 4$. 
\end{itemize}
\par
We observe that as both $\Phi$ and the graph maps are diffeomorphisms,  
$\Phi : \cV^{(i, j)}_w  \simeq  \Phi(\graph(f_i)(\cV^{(i, j)}_w))$.  We choose a 
$w \in B_{\eta_4}^*$ and let $Y_j  = \graph(f_2)((\cV^{(2, j)}_w)$ and $Z_j = \Phi(\graph(f_1)((\cV^{(1, j)}_w))$.  Consider the sequence of inclusions and mapping 
\begin{equation}
\label{Eqn2.8b} 
Z_4 \subset Y_3 \,\, \subset Z_2 \,\, \subset \,\, Y_1 \,\, \overset{H}{\rightarrow} \,\, (H^{-1}(w) \cap B_{\eta_4}) 
\end{equation}
Then, for cohomology (with coefficients in $R$ understood)
\begin{equation}
\label{Eqn2.9a} 
 H^*(H^{-1}(w) \cap B_{\eta_4}) \overset{H^*}{\rightarrow} H^*(Y_1) \rightarrow H^*(Z_2) \rightarrow H^*(Y_3) \rightarrow H^*(Z_4)
\end{equation}
Now the composition $H^*(Y_1) \rightarrow H^*(Z_2) \rightarrow H^*(Y_3)$ is an isomorphism; hence $H^*(Z_2) \rightarrow H^*(Y_3)$ is surjective.  Second, the composition $H^*(Z_2) \rightarrow H^*(Y_3) \rightarrow H^*(Z_4)$ is also an isomorphism so $H^*(Z_2) \rightarrow H^*(Y_3)$ is one-one.  Thus, $H^*(Z_2) \rightarrow H^*(Y_3)$ is an isomorphism.  Hence, so are the other inclusions isomorphisms.  \par
A similar argument for the Milnor fibers of $H$ for the various $j$, together with $\Phi$ preserving $H^{-1}(w)$ implies that $\Phi^*$ induces an isomorphism of the cohomology of the Milnor fiber.  Since the map $\Phi : \graph(f_1)(\cV^{(1, 2)}_w) \to \,\, \graph(f_2)(\cV^{(2, 1)}_w)$ commutes with $H$, we deduce that the induced isomorphism from $\Phi^*$ preserves the subalgebra $pr_2^*(H^*(H^{-1}(w)) \cap B_{\gd})$.  By the isomorphism on cohomology via $\graph(f_i)^*$, we obtain the preservation of the characteristic subalgebra.  
\end{proof}
\begin{Remark}
\label{Rem2.9b}
We can apply the preceding argument for the sequence of inclusions in \eqref{Eqn2.8b} to conclude $\Phi : \graph(f_1)(\cV^{(1, 2)}_w) \to \,\, \graph(f_2)(\cV^{(2, 1)}_w)$ induces an isomorphism for both integer homology and the fundamental group.  As the closures of both of these spaces are smooth manifolds with boundaries and hence have CW-complex structures, it follows by the Hurewicz and Whitehead theorems that the restriction of $\Phi$ is a homotopy equivalence.  
\end{Remark}
\subsection*{Invariance of Characteristic Cohomology $\cC_{\cV}(f_0; R)$ and 
$\cC_{\cB}(f_0; R)$ under $\cK_{\cV}$ Equivalence} \hfill 
\par 
In analogy with Proposition \ref{Prop2.4a}, the dependence of $\cC_{\cV}(f_0; R)$ and 
$\cB_{\cV}(f_0; R)$ on the $\cK_{\cV}$-equivalence class of $f_0$ is given by the next proposition.  
\begin{Proposition}
\label{Prop2.10a}
Suppose $f_i : \C^n, 0  \to \C^N, 0$, $i = 1, 2$ are $\cK_{\cV}$--equivalent.  Let $\cV_i = f_i^{-1}(\cV)$.  Then, for any coefficient ring $R$, there is a cohomology algebra isomorphism $\gb : H^*(\C^n \backslash \cV_1; R) \simeq H^*(\C^n \backslash \cV_2; R)$ such that $\gb(\cC_{\cV}(f_1; R)) = \cC_{\cV}(f_2; R)$.  \par 
Hence, the structure of the cohomology of the complement $\C^n \backslash \cV_i$ as a graded algebra (or graded module) over $\cC_{\cV}(f_1; R)$ is, up to isomorphism, independent of the $\cK_{\cV}$--equivalence class of $f_i$.
\end{Proposition}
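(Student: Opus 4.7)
The plan is to parallel the proof of Proposition~\ref{Prop2.4a}, replacing the Milnor fibers of $H$ by the complements $\overline{B}_{\gd}\backslash \cV$, and the role of $H\circ pr_2$ by that of $pr_2$ itself. A $\cK_{\cV}$--equivalence of $f_1, f_2$ gives representatives $f_i : U \to W$ together with a diffeomorphism onto its image
\[
\Phi : U^{\prime} \times W^{\prime} \to U \times W, \qquad (x,y) \mapsto (\varphi(x),\, \varphi_1(x,y)),
\]
sending $(0,0)\mapsto (0,0)$, preserving $\C^n\times\cV$ (so that $\varphi_1(x,\cdot)^{-1}(\cV)=\cV$ for each fixed $x$), and satisfying $f_2(\varphi(x)) = \varphi_1(x,f_1(x))$. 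In particular $\Phi(\graph(f_1)) = \graph(f_2)\cap\mathrm{Im}(\Phi)$ and $\Phi$ restricts to a diffeomorphism of $(\C^n\times\C^N)\backslash(\C^n\times\cV)$.

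First I would choose four nested levels $(\gevar_j,\gd_j)$, $j=1,\dots,4$, so that (a) $f_i(\overline{B}_{\gevar_j})\subset\overline{B}_{\gd_j}$; (b) $\Phi(\overline{B}_{\gevar_{j+1}}\times\overline{B}_{\gd_{j+1}})\subset\overline{B}_{\gevar_j}\times\overline{B}_{\gd_j}$; (c) $\overline{B}_{\gevar_{j+1}}\times\overline{B}_{\gd_{j+1}}\subset T_j := \Phi(\overline{B}_{\gevar_j}\times\overline{B}_{\gd_j})$; and (d) the stratification conditions (i)--(iv) preceding Definition~\ref{Def1.2} hold at each level for both $\cV\subset\overline{B}_{\gd_j}$ and $\cV_i\subset\overline{B}_{\gevar_j}$, so that every successive inclusion of complements is a homotopy equivalence. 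Set $Y_j := \graph(f_2)(\overline{B}_{\gevar_j}\backslash\cV_2)$ and $Z_j := \Phi(\graph(f_1)(\overline{B}_{\gevar_j}\backslash\cV_1))$; the nesting yields the chain
\[
Z_4 \,\subset\, Y_3 \,\subset\, Z_2 \,\subset\, Y_1 \,\subset\, (\overline{B}_{\gevar_1}\times\overline{B}_{\gd_1})\backslash(\C^n\times\cV).
\]
The composites $H^*(Y_1)\to H^*(Y_3)$ and $H^*(Z_2)\to H^*(Z_4)$ are each induced by a homotopy equivalence of complements---the former from the conical-structure property for $\cV_2$, the latter from the same property for $\cV_1$ transported via the diffeomorphisms $\graph(f_1)$ and $\Phi$. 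The same four-term factoring argument used in the proof of Proposition~\ref{Prop2.4a} then forces every inclusion in the chain to induce a cohomology algebra isomorphism, producing $\gb : H^*(\overline{B}_{\gevar_1}\backslash\cV_1;R) \simeq H^*(\overline{B}_{\gevar_2}\backslash\cV_2;R)$, which passes to the direct limit of Definition~\ref{Def1.2}.

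The delicate point is showing that $\gb$ preserves the characteristic subalgebra. Unlike the $\cK_H$ case, $\Phi$ does \emph{not} commute with $pr_2$; instead $pr_2\circ\Phi = \varphi_1$. However, for each $x$, $\varphi_1(x,\cdot)$ is a diffeomorphism of $(\overline{B}_{\gd_j}, \cV\cap\overline{B}_{\gd_j})$, so the family $\varphi_1(x,\cdot)$ over the contractible ball $\overline{B}_{\gevar_j}$ is an isotopy. The straight-line homotopy $(x,t)\mapsto\varphi_1(tx,\cdot)$ then produces a homotopy $\varphi_1\simeq \varphi_1(0,\cdot)\circ pr_2$ of maps on the product complement, so $\varphi_1^* = pr_2^*\circ \varphi_1(0,\cdot)^*$ on cohomology. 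Since $\varphi_1(0,\cdot)^*$ is an automorphism of $H^*(\overline{B}_{\gd}\backslash\cV;R)$, the image of $\Phi^*\circ pr_2^*$ equals that of $pr_2^*$, and pulling back through $\graph(f_i)^*$ gives $\gb(\cC_{\cV}(f_1;R))=\cC_{\cV}(f_2;R)$.

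The main obstacle I anticipate is coordinating the four nesting levels so that both families of stratification conditions together with (b)--(c) hold simultaneously, and in particular ensuring that the homotopy $\varphi_1\simeq \varphi_1(0,\cdot)\circ pr_2$ stays inside the product complement throughout---this is exactly where the $\cK_{\cV}$ hypothesis (uniform preservation of $\cV$ in $x$) is used, but the bookkeeping is nontrivial. Once $\cC_{\cV}$-invariance is in hand, the corresponding statement for $\cB_{\cV}(f_0;\bk)$ is a formal consequence: $\gb$ induces an isomorphism on the link via an ambient diffeomorphism of sphere pairs, and this is compatible with the Kronecker and Alexander dualities entering Definition~\ref{Def1.3}, so the induced link isomorphism carries $\cB_{\cV}(f_1;\bk)$ onto $\cB_{\cV}(f_2;\bk)$.
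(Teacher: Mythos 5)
Your proposal is correct and follows essentially the same route as the paper, which proves this proposition simply by citing the argument of Proposition~\ref{Prop2.4a} with Milnor fibers replaced by complements and with $\Phi$ now only preserving $\C^n \times \cV$. Your additional homotopy argument (contracting the ball factor to show $\varphi_1^* = pr_2^* \circ \varphi_1(0,\cdot)^*$, hence preservation of the image of $f_i^*$) fills in a step the paper leaves implicit, and it is a valid way to handle the fact that $\Phi$ no longer commutes with the projection.
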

\begin{proof}
The proof is similar to that for Proposition \ref{Prop2.4a}, except that the diffeomorphism 
$\Phi : U^{\prime} \times W^{\prime} \to U \times W $ in \eqref{Eqn2.4a} only preserves 
$\C^n \times \cV$.
\end{proof}
\par
Then, for links we have a corresponding result provided the coefficient ring $R = \bk$, a field of characteristic $0$.
\begin{Proposition}
\label{Prop2.11a}
Suppose $f_i : \C^n, 0  \to \C^N, 0$, $i = 1, 2$ are $\cK_{\cV}$--equivalent.  Let $\cV_i = f_i^{-1}(\cV)$.  Then, there is an isomorphism of graded vector spaces $\gb : H^*(L(\cV_1); \bk) \simeq H^*(L(\cV_2); \bk)$ such that $\gb(\cB_{\cV}(f_1; \bk)) = \cB_{\cV}(f_2; \bk)$.  \par 
Hence, $\cB_{\cV}(f_i; \bk)$ is, up to isomorphism, independent of the 
$\cK_{\cV}$--equivalence class of $f_i$.
\end{Proposition}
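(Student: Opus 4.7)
The plan is to reduce this to Proposition \ref{Prop2.10a} via the naturality of Alexander duality and of the Kronecker pairing. First I would extract from the $\cK_{\cV}$-equivalence a local diffeomorphism $\varphi : (\C^n, 0) \to (\C^n, 0)$ mapping $\cV_1$ to $\cV_2$: the diffeomorphism $\Phi(x, y) = (\varphi(x), \varphi_1(x, y))$ of the $\cK_{\cV}$-equivalence preserves $\C^n \times \cV$, so $\Phi(\graph(f_1)) = \graph(f_2) \cap \mathrm{Im}(\Phi)$ combined with the equivalence $(x, f_1(x)) \in \C^n \times \cV \iff x \in \cV_1$ forces $\varphi(\cV_1) = \cV_2$ on overlapping representatives. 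Choosing $0 < \gevar_2 < \gevar_1$ with $\varphi(\overline{B}_{\gevar_2}) \subset \overline{B}_{\gevar_1}$ and with the conical structure of \S\ref{S:sec1} in force on both balls, the restriction of $\varphi$ together with the conical-retraction homotopies induces a homotopy equivalence of pairs $(S^{2n-1}_{\gevar_2}, L(\cV_1)) \simeq (S^{2n-1}_{\gevar_1}, L(\cV_2))$, giving the vector-space isomorphism $\gb : H^*(L(\cV_1); \bk) \simeq H^*(L(\cV_2); \bk)$, and in parallel an algebra isomorphism $\gb_C$ of complement cohomologies sending $\cC_{\cV}(f_1; \bk)$ to $\cC_{\cV}(f_2; \bk)$ (the latter being exactly Proposition \ref{Prop2.10a}).

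Next I would chase the naturality diagram built from: the inclusions $j_{\gevar_i} : S^{2n-1}_{\gevar_i} \backslash \cV_i \hookrightarrow \overline{B}_{\gevar_i} \backslash \cV_i$ (homotopy equivalences by the conical structure); the Kronecker pairings between $H^*$ and $H_*$ of these complements; and the Alexander duality isomorphisms $\ga_i : \widetilde{H}^j(L(\cV_i); \bk) \simeq \widetilde{H}_{2n-2-j}(S^{2n-1}_{\gevar_i} \backslash L(\cV_i); \bk)$. Alexander duality is natural with respect to homeomorphisms of the ambient sphere that preserve the embedded subspace, so the restrictions of $\varphi$ (transported through the conical structure to genuine sphere maps) intertwine $\ga_1$ and $\ga_2$; and the Kronecker pairing is natural under continuous maps, so $\varphi_*$ carries the Kronecker dual of $\cC_{\cV}(f_1; \bk)$ to that of $\cC_{\cV}(f_2; \bk)$ in the homology of the sphere complement. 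Composing these identifications yields $\gb(\cB_{\cV}(f_1; \bk)) = \cB_{\cV}(f_2; \bk)$.

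The main obstacle is the basis-dependence flagged after Definition \ref{Def1.3}: $\cB_{\cV}(f_0; \bk)$ is not canonically defined as a subspace, only its graded dimension is. My way around this is to use $\gb_C$ to choose compatible Kronecker-dual bases on both sides simultaneously: first fix a basis of $\cC_{\cV}(f_1; \bk)$, extend it to a basis of $H^*(\overline{B}_{\gevar_1} \backslash \cV_1; \bk)$, and transport the whole configuration via $\gb_C$ to the $f_2$ side, so that $\gG_{\cV}(f_1; \bk)$ and $\gG_{\cV}(f_2; \bk)$ are realized as exact $\varphi_*$-images of one another. With this compatible choice the naturality diagram commutes on the nose, and the remaining verifications are routine diagram chases.
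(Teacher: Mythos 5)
Your proposal is correct and follows essentially the same route as the paper: both reduce to the complement isomorphism of Proposition \ref{Prop2.10a} via the diffeomorphism coming from the $\cK_{\cV}$-equivalence, choose corresponding bases so the Kronecker duals $\widetilde{\gG}_{\cV}(f_i;\bk)$ are identified, and then pass through Alexander duality to obtain $\gb$. The only difference is one of emphasis --- you realize $\gb$ directly from the restriction of $\varphi$ to the links and then verify naturality of the duality chain, whereas the paper simply defines $\gb$ as the composite through Alexander duality --- but this is the same argument.
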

\begin{proof}
The diffeomorphism $\Phi : U^{\prime} \times W^{\prime} \to U \times W$ induces diffeomorphisms $\graph(f_i) \cap \cV$ and $\graph(f_i) \backslash \cV$.  These induce diffeomorphisms  $U \cap \cV_1 \simeq U^{\prime} \cap \cV_2$ and $U \backslash \cV_1 \simeq U^{\prime} \backslash \cV_2$.  These first induce isomorphisms 
$H^*(U^{\prime} \backslash \cV_2; \bk) \simeq H^*(U \backslash \cV_1; \bk)$.    This continues to hold for sufficiently small balls using the argument in the proof of Proposition \ref{Prop2.4a}. As the homeomorphisms commute with $f_i^*$, we obtain the restriction isomorphism $\cC_{\cV}(f_1; \bk) \simeq \cC_{\cV}(f_2; \bk)$.  
\par 
Then, by choosing corresponding bases for these cohomology groups we obtain via the Kronecker pairings, isomorphisms with the homology groups of the complements. Then, associated to the isomorphisms between the $\cC_{\cV}(f_i; \bk)$, there is an induced isomorphism in reduced homology $\widetilde{\gG}_{\cV}(f_1; \bk) \simeq \widetilde{\gG}_{\cV}(f_2; \bk)$.  
Lastly, Alexander duality induces isomorphisms of graded vector spaces $\cB_{\cV}(f_1; \bk) \simeq \cB_{\cV}(f_2; \bk)$.  
\end{proof}

\section{Detecting the Nonvanishing of Characteristic Cohomology} 
\label{S:sec3.a}
\par 
We next ask for a singularity $\cV_0$ of type $\cV$, what will be the nonvanishing parts of the characteristic subalgebras $\cA_{\cV}(f_0; R)$, $\cC_{\cV}(f_0; R)$ and the characteristic cohomology $\cB_{\cV}(f_0; R)$?  For the Milnor fiber, $\cA_{\cV}(f_0; R)$  is isomorphic to a quotient algebra of $H^*(F_w ; R)$, but possibly it is just $H^0(\cV_w ; R)$.  Similarly, for the complement $\cC_{\cV}(f_0; R)$, it is isomorphic to a quotient of $H^*(\C^N \backslash \cV ; R)$; and then we can determine a nonzero subgroup in $\cB_{\cV}(f_0; R)$ via Alexander duality.  
\par 
We give a general method for detecting such non-zero subgroups of characteristic cohomology using \lq\lq vanishing compact models\rq\rq\, for both the Milnor fiber and complement.  \par
\subsection*{Nonvanishing Characteristic Cohomology for the Milnor Fiber} \hfill
\par
We consider a hypersurface singularity $\cV, 0 \subset \C^N, 0$ with defining equation $H : \C^N, 0 \to \C, 0$ and Milnor fibration $H : H^{-1}(B_{\eta}^*) \cap B_{\gd_0} \to B_{\eta}^* $.
\begin{Definition}
\label{Def2b.1}
We say that $\cV, 0$ has a {\em vanishing compact model for its Milnor fiber} if there is a compact space $Q_{\cV}$, smooth curves $\gg : [0, \eta) \to B_{\eta}$ satisfying $|\gg(t)| = t$ and $\gb : [0, \eta) \to [0, \gd_0)$, monotonic with $\gb(0) = 0$, and an embedding into the Milnor fibration of $H$, \par
$\Phi : Q_{\cV} \times (0, \gd) \hookrightarrow H^{-1}(B_{\eta}^*) \cap B_{\gd}$ such that: 
\begin{itemize}
\item[i)]  each $H : H^{-1}(\overline{B}_{|\gg(t)|}^*) \cap B_{\gb(t)} \to \overline{B}_{|\gg(t)|}^*$ is again a Milnor fibration for $H$
\item[ii)]  each $\Phi(Q_{\cV} \times \{ t\}) \subset F_w$ is a homotopy equivalence for $F_w$ the Milnor fiber of i) over $w = \gg(t)$. 
\end{itemize}
\end{Definition}
This is the analogue of a basis of smoothly vanishing cycles for the isolated hypersurface case.  \par 
Next, with the situation as above, let $E \subseteq H^*(Q_{\cV} ; R)$ be a graded subgroup.  We say that a compact subspace with inclusion map  $\gl_E : Q_{E} \subseteq Q_{\cV}$ detects $E$ in cohomology with $R$ coefficients if the map on cohomology 
$\gl_E^* : H^*(Q_{\cV}; R)  \to H^*( Q_{E}; R)$ induces an isomorphism from 
$E$ to $H^*( Q_{E}; R)$.  Then, we say that a {\em germ of an embedding $\iti_E : \C^s, 0 \to \C^N, 0$ detects $E$} if for sufficiently small $0 < \eta << \gevar < \gd$ there is a vanishing compact model $\Psi : Q_{E} \times (0, \gd) \hookrightarrow (H\circ \iti_E)^{-1}(B_{\eta}^*) \cap B_{\gevar}$ for the Milnor fibration of $H \circ \iti_E$ so that $\iti_E \circ \Psi = \Phi \circ (\gl_E \times id)$, i.e. \eqref{CD3.1} commutes.  
\begin{equation}
\label{CD3.1}
\begin{CD}
   {Q_{E} \times (0, \gd)} @> {\Psi} >> {(H\circ \iti_E)^{-1}(B_{\eta}^*) \cap B_{\gevar}}\\
   @V{\gl_E \times id}VV @V {\iti_E}VV \\
{Q_{\cV} \times (0, \gd)} @>{\Phi}>> {H^{-1}(B_{\eta}^*) \cap B_{\gd}}  \\
\end{CD} 
\end{equation}
\par
We then have the simple Lemma.
\begin{Lemma}[Detection Lemma for Milnor Fibers]
\label{Lem2.4a}
Given $f_0 : \C^n, 0 \to \C^N, 0$ defining $(\cV_0, 0)$ of type $\cV$, suppose there is a germ $g : \C^s, 0 \to \C^n, 0$ such that $f_0 \circ g$ is $\cK_H$-equivalent to a germ detecting $E$. Then, $\cA_{\cV}(f_0, R)$ contains a graded subgroup which is isomorphic to $E$ via $f_0^* : H^*(F_w; R) \to H^*(\cV_w; R)$ to $E$.  
\end{Lemma}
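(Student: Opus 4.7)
The plan is to combine three earlier ingredients: the detection hypothesis (unpacking diagram \eqref{CD3.1}), the $\cK_H$--invariance from Proposition \ref{Prop2.4a}, and the functoriality packaged in Corollary \ref{Cor1.14}. First, I would unpack what the detection of $E$ by $\iti_E$ gives at the level of cohomology. Fix any $t_0 \in (0, \gd)$ and set $w = \gg(t_0)$. Because $\Phi$ and $\Psi$ are vanishing compact models, the slice maps $\Phi_{t_0} : Q_{\cV} \hookrightarrow F_w$ and $\Psi_{t_0} : Q_E \hookrightarrow (H\circ\iti_E)^{-1}(w) \cap B_{\gevar}$ are homotopy equivalences, so their pullbacks are isomorphisms. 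Transporting $E$ across, put $\widetilde E := (\Phi_{t_0}^*)^{-1}(E) \subseteq H^*(F_w; R)$. Restricting \eqref{CD3.1} to the slice $\{t_0\}$ yields $\iti_E \circ \Psi_{t_0} = \Phi_{t_0} \circ \gl_E$, hence in cohomology $\Psi_{t_0}^* \circ \iti_E^* = \gl_E^* \circ \Phi_{t_0}^*$. Since $\gl_E^*$ carries $E$ isomorphically onto $H^*(Q_E; R)$ by hypothesis and $\Psi_{t_0}^*$ is an isomorphism, $\iti_E^*$ restricts to an injection on $\widetilde E$ whose image lies in $\cA_{\cV}(\iti_E, R)$.

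Next, by the $\cK_H$--invariance from Proposition \ref{Prop2.4a} applied to $f_0 \circ g \sim_{\cK_H} \iti_E$, there is a cohomology algebra isomorphism $\ga$ between the Milnor fiber cohomologies of $H\circ\iti_E$ and $H\circ (f_0\circ g)$ that identifies $\cA_{\cV}(\iti_E, R)$ with $\cA_{\cV}(f_0\circ g, R)$. The construction of $\ga$ uses a diffeomorphism on $\C^n \times \C^N$ that preserves $H\circ pr_2$ and intertwines the two graph maps, from which one reads off the pointwise compatibility $\ga \circ \iti_E^* = (f_0\circ g)^*$ as maps out of $H^*(F_w; R)$. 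Composing with the injection from the previous step, $(f_0\circ g)^*$ restricts to an injection on $\widetilde E$ with image inside $\cA_{\cV}(f_0\circ g, R)$, still isomorphic to $E$.

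Finally, $g : \C^s, 0 \to \C^n, 0$ defines a morphism $(f_0\circ g)^{-1}(\cV) \to \cV_0$ in the category of singularities of type $\cV$, and functoriality of $\cA_{\cV}$ from Lemma \ref{Lem1.10} gives the factorization $(f_0\circ g)^* = g^* \circ f_0^*$ (this is precisely the setup of Corollary \ref{Cor1.14} with $g_0 = f_0\circ g$). Injectivity of the composition on $\widetilde E$ forces $f_0^*$ to be injective on $\widetilde E$, so $f_0^*(\widetilde E) \subseteq \cA_{\cV}(f_0, R)$ is a graded subgroup isomorphic to $E$ via $f_0^*$, as claimed. The main delicate point is the compatibility $\ga \circ \iti_E^* = (f_0\circ g)^*$ invoked in the second step: Proposition \ref{Prop2.4a} only records that $\ga$ matches the two characteristic subalgebras abstractly, and one must extract from its proof that the identification is actually pointwise in $H^*(F_w; R)$, since it is only this stronger statement that allows the injectivity of $\iti_E^*|_{\widetilde E}$ to be transferred.
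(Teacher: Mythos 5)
Your proof is correct and follows essentially the same route as the paper: inject $E$ into $\cA_{\cV}(\iti_E, R)$ using the compact models and the commutativity of \eqref{CD3.1}, transfer by the $\cK_H$-invariance of Proposition \ref{Prop2.4a}, and conclude from the factorization $(f_0\circ g)^* = g^*\circ f_{0}^*$ supplied by Lemma \ref{Lem1.10}. The compatibility you flag as the delicate point is likewise passed over quickly in the paper's own argument, which simply asserts that the transported subspace is, up to an algebra isomorphism, the image under $g^*$ of $f_{0,w}^*(E)$.
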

\begin{proof}
\par
We use the functoriality of $g^* : \cA_{\cV}(f_0, R) \to \cA_{\cV}(f_0\circ g, R)$ given by Lemma \ref{Lem1.10}.  
We do so using the representation by \eqref{Eqn1.11}, with $\tilde{\varphi}$ representing $g$; and we first consider the case where the composition $f_0 \circ \tilde{\varphi}$ denotes $f_0 \circ g = \iti_E$.  Provided $w = \gg(t)$, with $0 < |w| < \eta$ is sufficiently small, there is the compact model $\Phi (Q_{\cV} \times \{ t\}) \subset F_w$.  The composition gives as the embedding $\iti_E : (Q_E \times \{ t\})  \subset (Q_{\cV} \times \{ t\}) \subset F_w$.  In cohomology it maps  $E \subseteq H^*(F_w; R)$ isomorphically to $H^*(Q_E \times \{ t\}; R) \simeq H^*(Q_E; R)$.  \par
Then, if we compose the corresponding version of \eqref{Eqn1.11}, the map on cohomology factors through $(Q_E \times \{ t\}) \subset S_w$ (for $\cS_w$ the Milnor fiber of $H\circ \iti_E$).  It will then send $E \subseteq H^*(F_w; R)$ isomorphically to the subgroup of the intermediate cohomology $H^*(\cV_w; R)$.  Thus, $\cA_{\cV}(f_0, R)$ contains this isomorphic copy of $E$ via $\tilde{f}_{0, w}$.  \par
Second, if instead $f_0 \circ g$ is $\cK_H$-equivalent to $\iti_E$, by Proposition \ref{Prop2.4a}, there is an algebra isomorphism $\cA_{\cV}(\iti_E, R) \simeq \cA_{\cV}(f_0\circ g, R)$.  Then, $\cA_{\cV}(f_0\circ g, R)$ contains a subspace isomorphic under an algebra isomorphism to $E$. Since this subspace is, up to an algebra isomorphism, the image of $g^*$ of the image of $\tilde{f}_{0, w}^*(E)$ that image must be an isomorphic image of $E$.  
\end{proof}
\par
\subsection*{Nonvanishing Characteristic Cohomology for the Complement and Link} \hfill
\par
With the above notation, we consider the characteristic cohomology of the complement and link.  We use the notation and neighborhoods given in the definition of the characteristic cohomology for the complement and link in \S \ref{S:sec1} for $f_0(\overline{B}_{\gevar_0}) \subset B_{\gd_0}$.  Then, 
$$ f_0^* : H^*(\overline{B}_{\gd_0}\backslash \cV ;R) \longrightarrow H^*(\overline{B}_{\gevar} \backslash \cV_0 ; R) \, . $$
We introduce a corresponding vanishing compact model for the complement. 
\begin{Definition}
\label{Def2b.2}
We say that $\cV, 0$ has a {\em vanishing compact model for the complement} if there is a compact space $P_{\cV}$, a smooth curve $\gg : [0, \gd) \to [0, \gd_0)$, monotonic with $\gg(0) = 0$, and an embedding into the complement of $\cV$, \par
$\Phi : P_{\cV} \times (0, \gd) \hookrightarrow B_{\gd_0} \backslash \cV$ such that: 
\begin{itemize}
\item[i)]  each $(\overline{B}_{\gg(t)}, \overline{B}_{\gg(t)} \cap \cV)$ again has a cone structure;
\item[ii)]  $(\overline{B}_{\gg(t^{\prime})}, \overline{B}_{\gg(t^{\prime})} \cap \cV) \subset (\overline{B}_{\gg(t)}, \overline{B}_{\gg(t)} \cap \cV)$ is a homotopy equivalence for $0 < t^{\prime} < t$; and
\item[iii)]  each $\Phi(P_{\cV} \times \{ t\}) \subset B_{\gg(t)} \backslash \cV$ is a homotopy equivalence.  
\end{itemize}
\end{Definition}
\par 
Next, with the situation as above, let $E \subseteq H^*(P_{\cV}; R)$ be a graded subgroup.  We say that a compact subspace with inclusion map  $\gs_E : P_E \subseteq P_{\cV}$ detects $E$ in cohomology with $R$ coefficients if the map on cohomology 
$\gs_E^* : H^*(P_{\cV}; R)  \to H^*(P_{E}; R)$ induces an isomorphism from 
$E$ to $H^*( P_{E}; R)$.  Then, we say that a {\em germ of an embedding $\itj_E : \C^s, 0 \to \C^N, 0$ detects $E$} if for sufficiently small $0 < \gevar < \gd$ with 
$\iti_E(B_{\gevar}) \subset B_{\gd}$, there is a vanishing compact model $\Psi : P_{E} \times (0, \gd) \hookrightarrow B_{\gevar} \backslash \itj_E^{-1}(\cV)$ so that $\itj_E \circ \Psi = \Phi \circ (\gs_E \times id)$.  We then have the simple Lemma.
\begin{Lemma}[Second Detection Lemma]
\label{Lem2.5a}
Given $f_0 : \C^n, 0 \to \C^N, 0$ defining $\cV_0, 0$ of type $\cV$, suppose there is a germ $g : \C^s, 0 \to \C^n, 0$ such that $f_0 \circ g$ is $\cK_{\cV}$-equivalent to a germ detecting $E$.  Then, $\cC_{\cV}(f_0, R)$ contains a graded subgroup which is isomorphic to $E$ via 
$f_0^* : H^*(B_{\gd} \backslash \cV; R) \to H^*(B_{\gevar} \backslash \cV_0); R)$.  
\end{Lemma}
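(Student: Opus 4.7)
The plan is to mirror the proof of the first Detection Lemma (Lemma \ref{Lem2.4a}), substituting complements for Milnor fibers and replacing Proposition \ref{Prop2.4a} by the $\cK_{\cV}$-invariance of $\cC_{\cV}(-, R)$ established in Proposition \ref{Prop2.10a}. First I would fix nested representatives $0 < \gevar_2 < \gevar_1 < \gd$ with $g(\overline{B}_{\gevar_2}) \subset B_{\gevar_1}$ and $f_0(\overline{B}_{\gevar_1}) \subset B_{\gd}$, small enough that all of the inclusions appearing in the direct limits defining $\cC_{\cV}(f_0, R)$ and $\cC_{\cV}(f_0\circ g, R)$ are homotopy equivalences. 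Lemma \ref{Lem1.10} then yields the factorization
\[ H^*(B_{\gd}\backslash \cV; R) \xrightarrow{f_0^*} H^*(B_{\gevar_1}\backslash \cV_0; R) \xrightarrow{g^*} H^*(B_{\gevar_2}\backslash(f_0\circ g)^{-1}(\cV); R), \]
whose composite is $(f_0\circ g)^*$.

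Next, I would treat the case $f_0\circ g = \itj_E$ directly. The vanishing compact model $\Psi : P_E\times (0,\gd)\hookrightarrow B_{\gevar_2}\backslash \itj_E^{-1}(\cV)$ provides a homotopy equivalence $\Psi(P_E\times\{t\})\hookrightarrow B_{\gevar_2}\backslash \itj_E^{-1}(\cV)$ for sufficiently small $t$, and by Definition \ref{Def2b.2}(iii) we likewise have that $\Phi(P_{\cV}\times\{t\})\hookrightarrow B_{\gg(t)}\backslash \cV$ is a homotopy equivalence. The commutation $\itj_E\circ \Psi = \Phi\circ(\gs_E\times \id)$ then identifies the induced map $\itj_E^*$ on cohomology with $\gs_E^*$, which by the detection hypothesis restricts to an isomorphism $E \xrightarrow{\simeq} H^*(P_E; R)$. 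Hence $(f_0\circ g)^* = g^*\circ f_0^*$ carries $E\subset H^*(B_{\gd}\backslash \cV; R)$ (identified via the model $\Phi$) isomorphically onto a graded subgroup of $\cC_{\cV}(f_0\circ g, R)$.

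For the general case, where $f_0\circ g$ is only $\cK_{\cV}$-equivalent to $\itj_E$, I apply Proposition \ref{Prop2.10a} to obtain an algebra isomorphism
\[ \gb : H^*(B_{\gevar_2}\backslash(f_0\circ g)^{-1}(\cV); R) \,\simeq\, H^*(B_{\gevar_2}\backslash \itj_E^{-1}(\cV); R) \]
restricting to $\cC_{\cV}(f_0\circ g, R) \simeq \cC_{\cV}(\itj_E, R)$. Combining with the previous case, the composite $\gb\circ g^*\circ f_0^*|_E$ is injective, which forces $g^*\circ f_0^*|_E$ and hence $f_0^*|_E$ to be injective. Therefore $f_0^*(E)\subset \cC_{\cV}(f_0, R)$ is a graded subgroup isomorphic to $E$, as required.

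The most delicate point I expect is the compatibility of $\gb$ with the pullbacks $f_0^*$ and $g^*$, so that the reduction to the detecting case really realizes $\gb((f_0\circ g)^*(E))$ as $\itj_E^*(E)$. As in the proof of Proposition \ref{Prop2.10a}, the underlying $\cK_{\cV}$-equivalence is realized by a diffeomorphism of a product neighborhood which preserves $\C^n\times\cV$ and commutes with the projection $pr_2$; tracking this diffeomorphism through the graphs of $f_0\circ g$ and $\itj_E$, exactly as is done for the Milnor fiber in Lemma \ref{Lem2.4a}, is what supplies the needed commutation at the level of the characteristic subalgebras. Once this identification is in place, the remaining shrinking of the radii $\gevar_i$ and $\gd$ through the inverse system is routine.
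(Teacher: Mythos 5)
Your proposal is correct and follows essentially the same route as the paper's proof: functoriality from Lemma \ref{Lem1.10} to factor $(f_0\circ g)^*=g^*\circ f_0^*$, the commutation $\itj_E\circ\Psi=\Phi\circ(\gs_E\times\id)$ of the vanishing compact models to handle the case $f_0\circ g=\itj_E$, and the $\cK_{\cV}$-invariance of $\cC_{\cV}$ to reduce the general case to that one. If anything you are slightly more careful than the paper at the final step (you correctly invoke Proposition \ref{Prop2.10a} and $\cK_{\cV}$-equivalence where the paper's text reads $\cK_H$ and Proposition \ref{Prop2.4a}, evidently a slip, and you explicitly flag the compatibility of the equivalence isomorphism with the pullbacks, which the paper passes over).
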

\begin{proof}
\par
The proof is similar to that for Lemma \ref{Lem2.4a} using instead the functoriality of $g^* : \cC_{\cV}(f_0, R) \to \cC_{\cV}(f_0\circ g, R)$ given by Lemma \ref{Lem1.10}.  As we are really working with local cohomology, we must consider the cohomology groups of complements on varying neighborhoods of $0$. 
By assumption there are vanishing compact models: 
$\Phi : P_{\cV} \times (0, \gd) \hookrightarrow B_{\gd_0} \backslash \cV$ and 
$\Psi : P_{E} \times (0, \gd) \hookrightarrow B_{\gevar} \backslash \iti_E^{-1}(\cV)$ so that 
$\itj_E \circ \Psi = \Phi \circ (\gs_E \times id)$.  
\par 
 Provided $0 < \gg(t) < \gd$ for $\gd$ sufficiently small, there is the compact model 
$\Phi (P_{\cV} \times \{ t\}) \subset B_{\gg(t)} \backslash \cV$.  The composition 
$\itj_E : (P_E \times \{ t\})  \subset (P_{\cV} \times \{ t\})$ is an embedding which in cohomology maps $E \subseteq H^*(P_{\cV} \times \{ t\}; R)$ isomorphically to $H^*(P_E \times \{ t\}; R) \simeq H^*(P_E; R)$.  \par
Then, we refer to corresponding version of \eqref{Eqn1.11} with $\tilde{\varphi}$ representing $g$ and the composition $f_0 \circ \tilde{\varphi}$ denoting 
$f_0 \circ g = \itj_E$.  We see that this composition further composed with the map on cohomology induced from $(P_E \times \{ t\}) \subset B_{\gevar} \backslash \iti_E^{-1}(\cV)$ will then send $E \subseteq H^*(B_{\gd_0} \backslash \cV; R)$ isomorphically to the graded subgroup of the intermediate cohomology 
$H^*(B_{\gevar} \backslash \cV_0; R)$.  Thus, $\cC_{\cV}(f_0, R)$ contains this isomorphic copy of $E$ via $f_0^*$.  \par
Also, if instead $f_0 \circ g$ is $\cK_H$-equivalent to $\iti_E$, by Proposition \ref{Prop2.4a}, there is an algebra isomorphism $\cC_{\cV}(\iti_E, R) \simeq \cC_{\cV}(f_0\circ g, R)$.  Then, $\cC_{\cV}(f_0\circ g, R)$ contains a subspace isomorphic under an algebra isomorphism to $E$. Since this subspace is, up to an algebra isomorphism, the image by $g^*$ of the image of $f_0^*(E)$, that image must be the isomorphic image of $E$.  
\end{proof}
\begin{Corollary}
\label{Cor2.5a}
Given $f_0 : \C^n, 0 \to \C^N, 0$ defining $\cV_0, 0$ of type $\cV$, suppose there is a germ $g : \C^s, 0 \to \C^n, 0$ such that $f_0 \circ g$ is $\cK_{\cV}$-equivalent to a germ detecting $E \subseteq \widetilde{H}^*(\C^N\backslash \cV; \bk)$, for $\bk$ a field of characteristic $0$.  Then, 
$\cB_{\cV}(f_0, \bk)$ contains a graded subgroup which is isomorphic via the Kronecker pairing and Alexander duality to the image of $E$ via the isomorphisms
$\widetilde{H}^j(B_{\gevar} \backslash \cV_0; \bk) \simeq \widetilde{H}^{2n - 2 -j} (S_{\gevar}^{2n-1} \cap \cV_0; \bk)$.  
\end{Corollary}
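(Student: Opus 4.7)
The plan is to invoke the Second Detection Lemma (Lemma~\ref{Lem2.5a}) to place an isomorphic copy of $E$ inside $\cC_{\cV}(f_0, \bk)$, and then to transport this subgroup through the chain of identifications that defines $\cB_{\cV}(f_0, \bk)$ in Definition~\ref{Def1.3}. The hypothesis here is identical to that of Lemma~\ref{Lem2.5a}, so by direct application we obtain a graded subgroup $E_0 \subseteq \cC_{\cV}(f_0, \bk) \subseteq \widetilde{H}^*(\overline{B}_{\gevar} \backslash \cV_0; \bk)$ carried isomorphically onto $E$ by the appropriate small-$\gevar$ representative of $f_0^*$.

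Next I would transfer $E_0$ into $\widetilde{H}^*(S^{2n-1}_{\gevar} \backslash \cV_0; \bk)$ via the cohomology isomorphism $j_{\gevar}^*$, which comes from the homotopy equivalence $j_{\gevar} : S^{2n-1}_{\gevar} \backslash \cV_0 \hookrightarrow \overline{B}_{\gevar} \backslash \cV_0$ furnished by the conical structure of the pair $(\overline{B}_{\gevar}, \overline{B}_{\gevar} \cap \cV_0)$ used in \S\ref{S:sec1}. Since $\bk$ is a field of characteristic $0$, the Kronecker pairing on the sphere-complement (co)homology is non-degenerate in each degree, so one can extend a basis of $j_{\gevar}^*(E_0)$ to a basis of $j_{\gevar}^* \circ f_0^*(H^*(\overline{B}_{\gd}\backslash \cV; \bk))$ and choose a Kronecker-dual basis in $\widetilde{H}_*(S^{2n-1}_{\gevar} \backslash \cV_0; \bk)$. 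This produces a graded subgroup $\gG_E \subseteq \widetilde{\gG}_{\cV}(f_0; \bk)$ sitting in degree $j$ whenever $E_0$ sits in degree $j$, and paired isomorphically to $j_{\gevar}^*(E_0)$.

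Finally, applying $\ga^{-1}$ from the Alexander duality isomorphism~\eqref{Eqn1.6} shifts degree by $j \mapsto 2n - 2 - j$ and sends $\gG_E$ into $\ga^{-1}(\widetilde{\gG}_{\cV}(f_0; \bk)) = \cB_{\cV}(f_0, \bk)$. By construction, the composition $\ga^{-1} \circ (\text{Kronecker}) \circ j_{\gevar}^*$ is exactly the isomorphism $\widetilde{H}^j(B_{\gevar} \backslash \cV_0; \bk) \simeq \widetilde{H}^{2n-2-j}(S^{2n-1}_{\gevar} \cap \cV_0; \bk)$ named in the statement, and it carries $E_0$ (the image of $E$ inside the complement's cohomology) onto the desired graded subgroup of $\cB_{\cV}(f_0, \bk)$.

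The main subtlety is not difficulty but bookkeeping: as already noted after Definition~\ref{Def1.3}, the specific subgroup of $\cB_{\cV}(f_0, \bk)$ one obtains does depend on the choice of Kronecker-dual basis, so the conclusion should be read up to isomorphism. However, non-degeneracy of the pairing over a field, together with finite-dimensionality of $E$ inherited from the compactness of $P_E$ and $P_{\cV}$ in the detecting vanishing compact models, forces $\dim_{\bk}$ of each graded piece to agree with $\dim_{\bk} E^j$, so the dimensional (and hence up-to-isomorphism) content of the statement is unambiguous. No new geometric input beyond Lemma~\ref{Lem2.5a} and the structural properties already recorded in \S\ref{S:sec1} is required.
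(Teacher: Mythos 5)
Your proposal is correct and follows exactly the route the paper takes: the paper's own proof is a one-line appeal to the Second Detection Lemma \ref{Lem2.5a} together with the definition of $\cB_{\cV}(f_0,\bk)$ via the Kronecker pairing and Alexander duality, and your argument simply spells out that composition (including the basis-dependence caveat the paper already records after Definition \ref{Def1.3}).
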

\begin{proof}
This is a consequence of the Second Detection Lemma \ref{Lem2.5a} and the definition of $\cB_{\cV}(f_0, \bk)$ via the Kronecker pairing and Alexander duality.  
\end{proof}

\section{Module Structure over Characteristic Cohomology for the Cohomology of Milnor Fibers} 
\label{S:sec7}
\par
In the hypersurface case we can consider the module structure of the cohomology of the Milnor fiber over the characteristic cohomology subalgebra.  
\par
We first consider two examples at the opposite extremes for matrix singularities 
Let $\cV_0 = f_0^{-1}(\cV)$ be defined by $f_0 : \C^n, 0 \to \C^N, 0$ for 
$\cV, 0 \subset \C^N, 0$. We illustrate how the characteristic subalgebra together with the topology of the \lq\lq singular Milnor fiber\rq\rq\, of $f_0$ contributes to the Milnor fiber cohomology, including the module structure, of $\cV_0$.
\begin{Example}
\label{Exam7.1}
  There are two cases at opposite extremes for singularities defined by $f_0 : \C^n, 0 \to \C^N, 0$ which is transverse off $0$ to $\cV$.  These are either $n < \codim(\sing(\cV))$ versus $f_0$ is the germ of a submersion.  In the first case, when 
$n < k = \codim(\sing(\cV))$, then $\cV_0$ has an isolated singularity, and the singular 
Milnor fiber for $f_0$ is diffeomorphic to the Milnor fiber for $\cV_0$, so the Milnor number of $\cV$ and the singular Milnor number of $f_0$ agree.  Also, by the result of Kato-Matsumoto \cite{KMs} the Milnor fiber of $\cV$ is $N - 2- (N- k)= k - 2$ connected.
Thus, $\cA^{(*)}(f_0, R) = H^0(\cV_w; R) \simeq R$.  
As the Minor fiber is homotopy equivalent 
to a CW-complex of real dimension $n-1$, the corresponding classes which occur for 
the Milnor fiber will have a trivial module structure over $\cA^{(*)}(f_0, R)$.  \par
Second, if $f_0$ is the germ of a submersion, then the Milnor fiber has the form 
$F_w \times \C^k$, where $F_w$ is the Milnor fiber of $\cV$ for $k = n - N$.  Thus, the Milnor fiber of $\cV_0$ has the same cohomology as $F_w$.   
We conclude that $f_0^* : H^*(\cV_w ; R)  \simeq H^*(F_w ; R)$, or 
$\cA^{(*)}(f_0, R) = H^*(F_w ; R)$.  Also, there are no singular vanishing cycles.  
\par 
Thus, for these two cases there is the following expression for the cohomology of the Milnor fiber, where the second summand has trivial module structure shifted by degree $n-1$.
\begin{equation}
\label{Eqn7.2}
   \quad H^*(\cV_{w}; R) \, \simeq \cA^{(*)}(f_0, R) \oplus R^{\mu}[n-1]   
\end{equation}
where $\mu = \mu_{\cV}(f_0)$ denotes the singular Milnor number for the corresponding hypersurface $\cV_0$.  
\end{Example}
\par 
\vspace{1ex} 
\flushpar
{\it Question: We ask how must \eqref{Eqn7.2} be modified for general singularities of given types?} 
\par
If $R$ is a field of characteristic $0$, then for a general hypersurface singularity we write \eqref{Eqn7.2} in a more general form as a direct sum.
\begin{equation}
\label{Eqn7.2a}
   \quad H^*(\cV_{w}; R) \, \simeq \cA_{\cV}(f_0, R) \oplus \cW_{\cV}(f_0, R)   
\end{equation}
We then may ask several questions about the properties of the summand 
$\cW_{\cV}(f_0, R)$.
\begin{itemize}
\item[i)] Does $R^{\mu}[n-1]$ for $\mu = \mu_{\cV}(f_0)$ occur as a summand?
\item[ii)] Does $\cW_{\cV}(f_0, R)$ vanish below degree $n-1$?
\item[iii)] If i) holds, is there an additional contribution in degree $n-1$ to 
$\cW_{\cV}(f_0, R)$?
\item[iv)] If ii) does not hold, can $\cW_{\cV}(f_0, R)$ be chosen to be an 
$\cA_{\cV}(f_0, R)$-submodule? 
\end{itemize}
One step in establishing i) is in the case that $\cV$ is a hypersurface defined by $H$ , which is $H$-holonomic; and $f_0$ has finite $\cK_H$-codimension.  
By the $H$-holonomic property, $f_0$ is transverse to $\cV$ in a neighborhood of $0 \in \C^n$.  Then, there is a stabilization of $f_0$,  $f_t : U \to M$ defined for $t \in (-\gg, \gg)$ for some $\gg > 0$, so that for 
$0 < | t | < \gg$ $f_t$ is transverse to $\cV$.  Since $H \circ f_t$ defines a hypersurface, it satisfies the Thom condition $a_f$ so for appropriate $0 < \eta << \gd$, we can stratify the mapping 
$$H \circ f_t | \overline{B}_{\gd} : (H \circ f_t)^{-1}(B_{\eta}) \cap \overline{B}_{\gevar}  \to B_{\eta} \, .$$  
Then, the system of tubes for the stratification provide a neighborhood $N_{\cV_t}$ of 
$\cV_t = f_t^{-1}(\cV) \cap B_{\gevar}$ and a retraction onto it (see e.g. \cite{M1}, \cite{M2}, or \cite{GDW}).  Given a Milnor fiber $\cV_w = (H\circ f_t)^{-1}(w) \cap \overline{B}_{\gevar}$, let $\pi$ denote the composition of the inclusion and the projection $\cV_w \subset N_{\cV_t} \to \cV_t$.  There is an induced homomorphism
\begin{equation}
\label{Eqn7.3}
\pi_* : H_*(\cV_w; R) \,\, \rightarrow \,\, H_*(f_t^{-1}(\cV) \cap \overline{B}_{\gevar}; R)\, .
\end{equation}
In the case $R$ is a field of characteristic zero as above, then if $\pi_*$ is surjective, the dual map in cohomology \eqref{Eqn7.4} is injective. 
\begin{equation}
\label{Eqn7.4}
\pi^* : H^*(f_t^{-1}(\cV) \cap \overline{B}_{\gevar}; R) \,\, \rightarrow \,\, H^*(\cV_w; R)\, . 
\end{equation}
  Thus, by a result of Damon-Mond \cite{DM}, which also holds in the 
$H$--holonomic case \cite{D1}, $f_t^{-1}(\cV) \cap \overline{B}_{\gevar}$ is homotopy equivalent to a bouquet of $\mu = \mu_{\cV}$ spheres of dimension $n-1$.  Thus, the injectivity of \eqref{Eqn7.4} gives the factor $R^{\mu}[n-1]$ in \eqref{Eqn7.2}.  \par 
This is just a first step in answering the above questions.  \vspace{1ex}
\flushpar 
{\it Partial Criterion for \eqref{Eqn7.2a}: } For the occurrence of $R^{\mu}[n-1] $ as a subspace of $\cW_{\cV}(f_0, R)$ in \eqref{Eqn7.2a} for a finitely $\cK_H$-determined germ it is sufficient that \eqref{Eqn7.3} is surjective.  \par
For the remaining questions, there are few special cases such as generic central hyperplane arrangements \cite{OR} and generic hypersurface arrangements \cite{Li} where the answer to ii) is positive.  However, there are significant additional contributions in degree $n-1$ to $\cW_{\cV}(f_0, R)$ (see \S \ref{S:sec11}).  
\par
\section{Detecting Characteristic Cohomology for various General Cases} 
\label{S:sec11}
\par 


\begin{table}
\begin{tabular}{|p{1.25in}|p{1.75in}|p{1.75in}|} \hline
\multicolumn{1}{|p{1.in}|}{ {\bf Singularity Type} } & 
\multicolumn{1}{c}{\bf \lq\lq Universal Singularity $\cV$ \rq\rq} & \multicolumn{1}{|p{1.5in}|}{\bf Singularities of type $\cV$} \\ \hline\hline
{\it Discrimants } & Discriminants of Stable Germs  & Discriminants of Finitely Determined Germs\\ \hline
{\it  Bifurcation Sets} & Bifurcation Sets of $\cG$-Versal Unfoldings & Bifurcation Sets of $\cG$-Finitely Determined Unfoldings \\ \hline
{\it Hyperplane Arrangements} & Special Central Hyperplane Arrangements & Generic Versions of Special Hyperplane Arrangements \\ \hline
{\it Hypersurface Arrangements} & Special Central Hyperplane Arrangements  & Hypersuface Arrangements of Special Type \\  \hline
{\bf Exceptional Orbit Hypersurfaces} & Defined by Linear Algebraic Group Representations with Open Orbits &  special types of determinantal arrangements:\\  \hline
{\it Quiver Discriminants} &  Discriminants for Quiver Representations of Finite Type & 
Discriminants from Mappings to Quiver Representation Spaces \\ \hline
{\it Cholesky-Type Factorizations} &  Discriminants for Cholesky-Type Factorizations & 
Discriminants for Cholesky-Type Factorizations for Matrix Families. \\ \hline
{\em Matrix Singularities} & Varieties of Singular $m \times m$ Matrices of Three Types and $m \times p$ Matrices for $m \neq p$ &  Matrix Singularities of any of these Types \\ \hline
\end{tabular}
\vspace*{0.2cm}
\caption{Examples of General Cases of Singularities of Given Types.}
\label{table:gen.cases}
\end{table}

\par
We provide special examples of the general case of singularities of type $\cV$, a hypersurface which represents a \lq\lq universal singularity type\rq\rq.  We summarize below the descriptions of several of the main classes of singularities of given universal singularity types in Table \ref{table:gen.cases}.  These were mentioned in the introduction and all of them have characteristic cohomology for Milnor fibers (in the hypersurface case), complements, and links.  We can ask to what extent the form of the characteristic cohomology has been identified for each of these cases and when can the nonvanishing part be determined?  We briefly comment on the cases and their relation with the results here.  
\par
\vspace{1ex}
\flushpar
\subsection*{Exceptional Orbit Hypersurfaces (yielding special determinantal arrangements):} \par
Given a complex representation $\rho : G \to GL_N(\C)$ of a connected linear algebraic group $G$ with open orbit $U$ in $\C^N$, the union of the orbits of positive codimension form the {\em exceptional orbit variety} $\cV \subset \C^N$.  Such a space was first investigated by Sato \cite{So}, also see \cite{SK}, and is called a prehomogeneous space (although he referred to $\cV$ as the \lq\lq singular set\rq\rq which would conflict with our general discussion).  If $\cV$ is a hypersurface we refer to it as the {\em exceptional orbit hypersurface}.  There are a number of important classes of singularities which arise in this manner.  \par
The varieties of singular $m \times m$ matrices of each type are exceptional orbit hypersurfaces for appropriate representations of $GL_m(\C)$.  Also the variety of singular $m \times p$ matrices with $m \neq p$ is an exceptional orbit variety for a representation of $GL_m(\C) \times GL_p(\C)$.  In part II of this paper we shall extensively study the matrix singularities for the corresponding varieties of singular matrices.  \par
Second, as listed in the tables, the form of the characteristic cohomology has been explicitly determined by the results in \cite{DP} and \cite{D3} for coefficients over a field of characteristics $0$.  This includes the cases of discriminants of quiver representation spaces of finite type and the discriminants for (modified) Cholesky-type factorizations.  Singularities of these types are given by special types of \lq\lq determinantal arrangements\rq\rq given in \cite{DP2}.  For these cases, compact models for Milnor fibers and complements are given as homogenenous spaces and can be used to define vanishing compact models in \cite{DP}.  Then, the tower structures given in \cite{DP2} can be used to give analogous versions of kite maps for these cases which can be used for detection criteria.  \par 
Third, in the pioneering work of Buchweitz and Mond, the representation spaces for quivers of finite type were considered in \cite{BM} and identified as prehomogeneous spaces for reductive groups for which the {\em quiver discriminants}, which are exceptional orbit hypersurfaces, provided a large class of linear free divisors.  There are also compact models as given in \cite{D3} which can be used to construct vanishing compact models.  Now the restrictions of the Dynkin diagrams and root structures in \cite{BM} need to be employed to define detection maps.  \par 
In both cases the details still have to be determined for identifying nonvanishing parts of the characteristic cohomology.  
\par
\vspace{1ex}
\flushpar
\subsection*{Central Hyperplane and Hypersurface Arrangements :} \hfill \par
For a central hyperplane arrangement $\cV \subset \C^N$, it follows from the work of Arnold \cite{A}, Brieskorn \cite{Br}, and Orlik-Solomon \cite{OS} (more generally \cite[Chaps. 3, 5]{OT}), there is an explicit description of the cohomology of the complement $H^*(\C^N\backslash \cV; \C)$ generated by $1$-forms corresponding to each hyperplane with combinatorially defined relations (in fact, by Brieskorn,  this holds for coefficients $\Z$ using the $\Z$-subalgebra on these generators).  
For a central hyperplane arrangement $\cV_0 \subset \C^n$ defined by a linear map $f_0 : \C^n, 0 \to \C^N, 0$ transverse to $\cV$ off $0 \in \C^n$, it then follows from transversality that the combinatorial conditions up to codimension $n-1$ continue to hold.  It follows that $H^*(\C^n\backslash \cV_0; \C) = \cC_{\cV}(f_0, \C)$. This then allows us to compute $\cB_{\cV}(f_0, \C)$ by adding relations in degree $n-1$ and above; and then $H^*(L(\cV_0); \C) = \cB_{\cV}(f_0, \C)$ can be explicitly computed.  \par
In the case that $f_0$ is nonlinear there is no general result for 
$\cC_{\cV}(f_0, \Z)$, although we know the form it has as the image 
$f_0^*(H^*(B^N_{\gd}\backslash \cV; \Z))$ for sufficiently small $\gd >0$.  
The problem for determining this image involves detecting the nonvanishing of the terms.  One result is obtained by Libgober \cite{Li} for the case where $\cV = \cB_N$, 
the Boolean arrangement.  The singularities $\cV_0$ are referred to by him as {\it isolated non-normal crossings} (INNC) (these are the same as hypersurface arrangements defined by a finitely $\cK_{\cB_N}$-determined germ $f_0$ \cite{D1}).  Then, 
$\C^N\backslash \cB_N$ is homotopy equivalent to a torus $T^N$ so 
$$ H^*(\C^N\backslash \cB_N; \Z) \,\, \simeq  \,\, \gL^*\Z< e_1, \dots , e_N>\, .$$
The result of Libgober \cite[Thm 2.2]{Li} gives results for the homotopy groups, which 
together with the relative Hurewicz Theorem and the universal coefficient theorem, 
implies that $\cC_{\cV}(f_0, \Z)$ contains $\gL^*\Z< e_1, \dots , e_N>$ up through degrees $\leq n-2$; while there is not an explicit formula for degree $n-1$, Libgober  does obtain results for this degree using properties of the \lq\lq characteristic variety of an INNC\rq\rq.  \par 
However, there does not exist a general result guaranteeing the nonvanishing of the characteristic cohomology for generic hypersurface arrangements based on a general central hyperplane arrangement.  In the case of complexified arrangements, the Salvetti complex, see e.g. \cite[\S 5.2]{OT}, provides a compact model for the complement, which then provides a vanishing compact model for the detection method.  Hence, detection maps can be defined by linear sections whose images contain appropriate subspaces of the Salvetti complex.  If $df_0(0) : \C^n \to \C^N$ contains 
a generic $k$-plane section, then $f_0$ plays the role of a detection map; and the detection method will imply that $H^*(\C^N\backslash \cB_N; \Z)$ will map 
isomorphically in degree $< k-1$ onto its image in the characteristic cohomology of 
the complement.  \par  
\subsubsection*{Milnor Fibers of Hyperplane and Hypersurface Arrangements:} 
\par
For the cohomology of the Milnor fiber of central hyperplane arrangements, there are basically very few results.  
For central generic arrangements, the cohomology has been determined by Orlik-Randell.  The Milnor fiber of $B_N$ has the homotopy type of a torus of dimension 
$N-1$ so its cohomology has the form $\gL^*\Z< e_1, \dots , e_{N-1}>$. 
Orlik Randell \cite[Thm 2.6]{OR} show that this maps isomorphically to 
$H^*(\cV_w; \Z)$ in degrees $< n-1$ and in degree $n-1$ the Betti number is 
$b_{n-1} = \binom{N-2}{n-2} + N \binom{N-2}{n-1}$.  It follows the characteristic cohomology contains all of $\gL^*\Z< e_1, \dots , e_{N-1}>$ up through degree $n-2$.  
\par
There is an analogous result for generic hypersurface arrangements, i.e. INNC, by a result of Libgober \cite[Prop. 4.6]{Li} which also implies that the characteristic cohomology of the Milnor fiber contains all of $\gL^*\Z< e_1, \dots , e_{N-1}>$ up through degree 
$n-2$.  However, he does not give an explicit formula for $b_{n-1}$.  
Both of these results use a covering representation of the Milnor fiber to carry out the computations.  This was extended by Cohen-Suciu \cite{CS} to more general hyperplane arrangements; however, their computation involves complexes of chains for local systems on the covering representation.  This allows them to compute explicitly the result for certain hyperplane arrangements in dimension $\leq 3$, but there are not general results. \par
These show that for the generic linear arrangements and hypersurface arrangements the characteristic cohomology for the Milnor fiber occupies all degrees below $n-1$, so for these cases the answer to question ii) (in \S \ref{S:sec7}) is positive.  
We also ask for the extent of the additional contribution to $\cW_{\cV}(f_0, R)$ in 
\eqref{Eqn7.2a}.  As $B_N$ is a linear free divisor, we can compute $\mu_{B_N}(f_0)$ 
using the calculations in \cite[\S 6]{D1}.  For the generic hyperplane arrangement case, we have $\mu_{B_N}(f_0) = \binom{N-1}{n}$.  Also, in degree $n-1$, the characteristic cohomology can contribute a subspace of dimension $\binom{N-1}{n-1}$.  Then, 
$b_{n-1}$ can be reexpressed in terms of these two dimensions by:
$b_{n-1} =  \binom{N-1}{n-1} + n \binom{N-1}{n}$.  It follows that if the characteristic cohomology contributes the full amount in degree $n-1$, then there is still an additional contribution to $\cW_{\cV}(f_0, R)$, beyond that from the singular Milnor fiber, of dimension $(n-1) \binom{N-1}{n}$.  This says that each singular vanishing cycle contributes $n$ vanishing cycles to the Milnor fiber.  This raises the question of how exactly this extra cohomology is realized geometrically.
\par
For a generic hypersurface arrangement $\cV_0, 0$ defined by a nonlinear map germ $f_0$ transverse to $B_N, 0$ off $0 \in \C^n$, there are less precise results, even though we know the form of $\cC_{\cV}(f_0, \C)$ and $\cB_{\cV}(f_0, \C)$ by the above.    
To detect nonvanishing contributions to the characteristic cohomology for the 
Milnor fiber using the method given here, requires vanishing compact models for the Milnor fiber which we do not have.  \par
We are able to give one type of example where we can explicitly see what occurs in cohomology degree $n-1$.
\begin{Example}
\label{Ex11.6}
We consider an isolated curve singularity $\cV_0, 0 \subset \C^2, 0$ defined by 
$f = f_1 \cdot f_2 \cdots f_k$ with each $f_j$ defining an isolated curve singularity 
$\cV_i$, so $\cV_0 = \cup_{i = 1}^{k} \cV_i$.  We can alternately consider $\cV_0$ as a generic hypersurface arrangement defined by $F = (f_1, \dots , f_k) : \C^2, 0 \to \C^k, 0$ for the Boolean arrangement $\cB_k \subset \C^k$.  We note that $\C^2$ lies below the dimension to which the result of Libgober applies.  \par
We can stabilize $F$ to $F_t = (f_{1\, t}, \dots , f_ {k\, t}) : U \to \C^k$ so in particular each $\cV_{j\, t} = f_{j\, t}^{-1}(0) \cap B_{\gevar}$ is a Milnor fiber for $f_j$ and the $\cV_{j\, t}$ pairwise intersect transversely.  Then, $\cV_{0\, t} = \cup_{i = 1}^{k} \cV_{j\, t}$ is the singular Milnor fiber for $F$.  It is homotopy equivalent to a bouquet of $\mu_{\cB_k}(F)$ $S^1$\rq s.  If $I(f_i, f_j)$ denotes the intersection number of $\cV_{i\, t}$ and $\cV_{j\, t}$.  A smooth nearby fiber of $f$ close to $\cV_{0\, t}$ adds one vanishing cycle for each intersection point.  Thus, the Milnor number of $f$ is given by
\begin{equation}
\label{Equ11.9}
  \mu(f) \,\, = \,\,  \mu_{\cB_k}(F) \, + \, \sum_{i < j} I(f_i, f_j) \, . 
\end{equation} 
Then, $\cB_k$ has a Milnor fiber which is homotopy equivalent to a $k-1$ torus and has the torus as a compact model.  Thus, the possible contribution to $\cA_{\cB_k}(F, \Z)$ in dimension $1$ would have rank $k-1$.  However, for most examples the sum of intersection numbers considerably exceeds $k-1$; thus, $\cW_{\cB_k}(F, \Z)$ must be considerably larger than the contribution from characteristic cohomology.  For example if $f = f_1\cdot f_2\cdot  f_3$ with the $f_i$ distinct generic quadrics, then $\mu(f) = 25$, by \cite[\S 6]{D1} $\mu_{\cB_3}(F) = 13$, and the sum of intersection numbers is $12$; while $3-1 = 2$.  Thus, most of the cohomology in dimension $1$ that does not come from the singular Milnor fiber lies outside of the characteristic cohomology.  \par
A basic question then is to determine geometrically what part of the characteristic cohomology exists in $\cW_{\cB_k}(F, \Z)$ and what geometrically accounts for the remainder.  Anatoly Libgober indicates that results from \cite{CNL} contribute to answering this question.  
\end{Example}
\subsection*{Discriminants and Bifurcation Sets :} \hfill
 \par 
There are only very limited results for the topological structure of the complement for either discriminants or bifurcation sets.  For the stable germs obtained by unfolding simple hypersurface singularities, the complement is a $K(\pi, 1)$ by results of Arnold and Brieskorn.  However, this does not continue to be always true for ICIS by H. Kn\"{o}rrer.  Also, there is an explicit basis for the cohomology of complements of discriminants of stable $A_k$-singularities by results of Fuks \cite{Fk}, Vainstein \cite{V} and for those of types $C$, and $D$ for functions on manifolds with boundaries, by Goryunov \cite{G}, \cite{G2}.  Hence, only for complements of discriminants of finitely determined germs of these types do we have the form for $\cC_{\cV}(f_0, \C)$.  Otherwise little is known about the characteristic cohomology for these singularities.  \par 
Also, there are many different equivalence groups $\cG$ in the holomorphic category , allowing additional features to be preserved such as (flags of) distinguished parameters, equivariant germs, diagrams of mappings, distinguished varieties, and restrictions to (flags of) subvarieties, etc.  These are geometric subgroups of $\cA$ or $\cK$.  Then, unfoldings of finitely $\cG$-determined germs are modeled as singularities of type the 
bifurcation sets of $\cG$-versal unfoldings.  These need not always be hypersurfaces; however, in many important cases they are.  For virtually all of these, the cohomology of the Milnor fiber (in the hypersurface case) and that of the complement and link is unknown.  Hence, even the form of the characteristic cohomology is unknown.  Because of such a great variety of possibilities, essentially nothing is known about the topology of bifurcation sets of unfoldings for any of these groups $\cG$.  
\par 
By contrast, many of the universal singularities have been shown to be 
(H-holonomic) free divisors, see e.g. the list in \cite{D1} and the additional work in e.g. \cite{GM} and \cite{DP2}.  Thus, for these we can compute the singular Milnor number to determine a possible contribution for the Milnor fiber using the results of the previous section.  \par 
Hence, all of the list of questions given for matrix singularities still remain to be resolved in these cases. 
\par 

\end{document}

\subsection*{General $m \times p$ Matrix Singularities with $m \geq p$:}  \par 
For $m \times p$ matrix singularities with $m \neq p$, with neither $= 1$, the variety of singular matrices $\cD_{m, p}$ is not a hypersurface and therefore does not have a Milnor fiber.  However, the complement has a compact homotopy model given by a Stiefel manifold; and, the combined work of J. H. C. Whitehead \cite{W}, C. E. Miller \cite{Mi}, and I. Yokota \cite{Y} give a Schubert cell decomposition  for it which corresponds to the computation of  the cohomology of the complement given e.g. in \cite[Thm. 3.10]{MT} (or see e.g. \cite[\S 8]{D3}).  Thus, for appropriate coefficients, the form of both 
$\cC_{\cV}(f_0, R)$ and $\cB_{\cV}(f_0, \bk)$ can be given for $\cV = \cD_{m, p}$ and $f_0 : \C, 0 \to M_{m, p}(\C), 0$.  Then, the Stiefel manifolds can be used to define vanishing compact models.  Again the inclusion of the matrix subspaces can be used to detect nonvanishing characteristic cohomology for the complement and link. \par 
If $n < | 2 (m-p +2)|$, then by transversality, $\cV_0$ has an isolated singularity and so has a Milnor fiber for any smoothing.  There does not appear to be a mechanism for showing this Milnor fiber inherits topology from $M_{m, p}(\C)$.  However, for $(m, p) = (3, 2)$, Fr\"{u}hbis-Kr\"{u}ger and Zach \cite{F}, \cite{Z}, \cite{FZ} have shown that for the resulting Cohen-Macaulay $3$-fold singularities in $\C^5$, the Milnor fiber has $b_2 = 1$, allowing the formula of Damon-Pike \cite{DP2} to yield an algebraic formula for $b_3$.  It remains to be understood how this extends to larger $(m, p)$.

this factorization has the form of iterated \lq\lq 
Cartan conjugacies\rq\rq\, by pseudo-rotations. The decomposition respects 
the towers of Milnor fibers and symmetric spaces ordered by inclusions.  
Furthermore, the \lq\lq Schubert cycles\rq\rq, which are the closures of 
the Schubert cells, are images of products of suspensions of projective 
spaces (complex, real, or quaternionic as appropriate).  In the cases of 
general or skew-symmetric matrices the Schubert cycles have fundamental 
classes, and for symmetric matrices $\mod 2$ classes,  which give a basis 
for the homology.  They are also shown to correspond to the cohomology 
generators for the symmetric spaces.  For general matrices the duals of the 
Schubert cycles are represented as explicit monomials in the generators of 
the cohomology exterior algebra; and for symmetric matrices they are 
related to Stiefel-Whitney classes of an associated real vector bundle.\par 
Furthermore, for a matrix singularity of any of these types. the pull-backs of 
these cohomology classes generate a characteristic subalgebra of the 
cohomology of its Milnor fiber.    \par
We also indicate how these results extend to exceptional orbit 
hypersurfaces, complements and links, including a characteristic subalgebra 
of the cohomology of the complement of a matrix singularity.

\begin{figure}
$$
\begin{matrix} 
   &
     \begin{array}{ccc}   
       \overbrace{
         \hphantom{\begin{matrix} * \;\;\cdots\;\;* \end{matrix}}
       }^{ \ell}
  &
       \overbrace{
         \hphantom{\begin{matrix} 1\quad 0\end{matrix}}
       }^{2}
  &
        \hphantom{\begin{matrix} 1 & 0 \end{matrix}}
     \end{array}
   \\
     \begin{array}{r}    
       \textrm{$\ell$} \left\{\vphantom{\begin{matrix} 0 \\ 0 
\end{matrix}}\right. \\
       2 \left\{\vphantom{\begin{array}{c} * \\ * \end{array}}\right. \\
       \vphantom{ \begin{matrix} 0 \\ 0 \end{matrix}}
     \end{array}
     \mspace{-25mu}
   &
     \left(\begin{array}{c|c|c}
       \phantom{\begin{matrix} 0 \\ 0 \end{matrix}} & & \\
 	\hline
       * \;\;\cdots\;\;* & 1\quad 0 & \\
       * \;\;\cdots\;\;* & 0\quad * & \\
 	\hline
         & & \phantom{\begin{matrix} 1 & 0 \\ 0 & 1 \end{matrix}}\\
     \end{array}\right)
\end{matrix}
$$
\caption{A linear kite of size $\ell$ in the space of $m \times m$ general or symmetric matrices.}
\label{fig:ellprime}
\end{figure}

\vspace{5ex}

\begin{figure}
$$
\begin{matrix} 
   &
     \begin{array}{ccc}   
      \overbrace{
         \hphantom{\begin{matrix} \quad * \;\;\cdots\;\;* \end{matrix}}
       }^{ \ell}
  &
       \overbrace{
         \hphantom{\begin{matrix} 1\quad 0 & 0 \end{matrix}}
       }^{m-\ell}
  &
        \hphantom{\begin{matrix} 1 & 0 & 0 \end{matrix}}
     \end{array}
   \\
     \begin{array}{r}    
        \textrm{$\ell$} \left\{\vphantom{\begin{matrix} 0 \\ 0 \\ 0 
\end{matrix}}\right. \\  \\  \\
       m - \ell \left\{\vphantom{\begin{array}{c} * \\ * \\ * \end{array}}\right. \\
       \vphantom{ \begin{matrix} 0 \\ 0 \end{matrix}}
     \end{array}
     \mspace{-25mu}
   &
     \left(\begin{array}{c|c}
 * \;\;\cdots\;\;* & 0 \;\;\cdots\;\; 0  \\ 
\cdots\;\;\cdots & \cdots\;\;\cdots  \\ 
 * \;\;\cdots\;\;* & 0 \;\;\cdots\;\; 0  \\
 	\hline
       0 \;\;\cdots\;\; 0 & * \,\, \quad 0 \,\,\, \quad 0 \\
       \cdots\;\;\cdots & \,\, 0 \quad \ddots \quad 0  \\
       0 \;\;\cdots\;\; \, 0 & \, 0 \,\,\quad 0 \,\,\, \quad * \\
     \end{array}\right)
\end{matrix}
$$
\caption{A linear kite of size $\ell$ in the space of $m \times m$ general or symmetric matrices.}
\label{fig:gen.sym-kite}
\end{figure}
\vspace{10ex}

\begin{figure}
$$
\begin{matrix} 
   &
     \begin{array}{ccc}   
       \overbrace{
         \hphantom{\begin{matrix} * \;\;\cdots\;\;* \end{matrix}}
       }^{ \ell}
  &
       \overbrace{
         \hphantom{\begin{matrix} 1\quad 0\end{matrix}}
       }^{m - \ell}
  &
        \hphantom{\begin{matrix} 1 & 0 \end{matrix}}
     \end{array}
   \\
     \begin{array}{r}    
       \textrm{$\ell$ even} \left\{\vphantom{\begin{matrix} 0 \\ 0 
\end{matrix}}\right. \\
       m - \ell \left\{\vphantom{\begin{array}{c} * \\ * \end{array}}\right. \\
       \vphantom{ \begin{matrix} 0 \\ 0 \end{matrix}}
     \end{array}
     \mspace{-25mu}
   &
     \left(\begin{array}{c|c|c}
       \phantom{\begin{matrix} 0 \\ 0 \end{matrix}} & & \\
 	\hline
       * \;\;\cdots\;\;* & 1\quad 0 & \\
       * \;\;\cdots\;\;* & 0\quad * & \\
 	\hline
         & & \phantom{\begin{matrix} 1 & 0 \\ 0 & 1 \end{matrix}}\\
     \end{array}\right)
\end{matrix}
$$
\caption{A linear skew-kite of size $\ell = 2k$ in the space of $m \times m$ skew-symmetric matrices with $m$ even.}
\label{fig:skew-kite}
\end{figure}

\begin{Proposition}
\label{Prop1.4}
Given an $\cK_H$--trivial family $f : \C^n \times [0, 1], \{ 0\} \times [0, 1] \to \C^N, 0$, let $F_t$ denote the Milnor fiber of $H \circ f_t$, where $f_t(x) = f(x, t)$.  Then for any $0 \leq t \leq 1$, there is an isomorphism $\ga : H^*(F_0; R) \simeq H^*(F_t; R)$ such that 
$\ga(\cA_{\cV}(f_0; R)) = \cA_{\cV}(f_t; R)$.
\end{Proposition}
\par
Thus, the characteristic subalgebra is, up to isomorphism, essentially independent of the $\cK_H$ equivalence class. 
\begin{proof}
We first remark it is sufficient to prove the result for the case of an $\cK_H$--trivial germ $f : \C^n \times \R, (0, 0)  \to \C^N, 0$.  Then, this gives the result of some small interval about $0 \in \R$.  By the compactness of $[0, 1]$, we can cover it by a finite number of such open intervals, and then we can compose a finite number of such isomorphisms to obtain the desired one.  \par
Then we consider an an $\cK_H$- trivial germ $f : \C^n \times \R, (0, 0)  \to \C^N, 0$.  It is represented by a map $f : U \times [\gg, \gg] \to W$ such that there is a diffeomorphism onto a subspace
\begin{align}
\label{Eqn1.4}
\Phi : U^{\prime} \times [\gg, \gg] \times W^{\prime} &\to U \times [\gg, \gg] \times W  \\
(x, t, y)\qquad  &\mapsto \qquad (\varphi(x, t), t, \varphi_1(x, t, y)) \notag
\end{align}
such that $(0, t, 0) \mapsto (0, t, 0)$ for each $t$ and so that 
$f(x, t) = \varphi_1(x, t, f_0(x))$.  \par
Now let $t_0 \in (\gg, \gg)$.  
\begin{itemize}
\item[Step 1] first, we may find $0 < \gd << \gevar$ so that $B_{\gevar} \subset U^{\prime}$, $B_{\gd} \subset W^{\prime}$ and both 
\begin{equation}
\label{Eqn1.5}
 f_{t_0} : f_{t_0}^{-1}(B_{\gd}^*) \cap B_{\gevar} \to B_{\gd}^* \quad \text{ and } \quad f_{0} : f_{0}^{-1}(B_{\gd}^*) \cap B_{\gevar} \to B_{\gd}^* 
\end{equation} 
are Milnor fibrations for $f_{t_0}$, resp. $f_0$.  \par
Next, we choose $0 <\gevar_1 < \gevar$ and $0 <\gd_1 < \gd$ so that 
\begin{equation}
\label{Eqn1.6}
 T_1 = \Phi(B_{\gevar_1} \times [\gg, \gg] \times B_{\gd_1}) \,\, \subset \,\, B_{\gevar} \times [\gg, \gg] \times B_{\gd} \, .
\end{equation}  
\item[Step 2] As $T_1$ is an open neighborhood of $\{0 \} \times [\gg, \gg] \times \{0\}$ in $B_{\gevar} \times [\gg, \gg] \times B_{\gd}$, there exists 
$0 < \gevar_2 < \gevar_1$ and $0 < \gd_1 < \gd$ so that $B_{\gevar_2} \times [\gg, \gg] \times B_{\gd_1}) \subset T_1$.  Also, $\gd_1$ may be chosen small enough so that $B_{\gevar_2}$ and $B_{\gd_1}$ may be used for the Milnor fibers of both $f_0$ and $f_{t_0}$.  
\item[Step 3] Next, we repeat Step 2.  We may find $0 < \gevar_3 < \gevar_2$ and 
$0 < \gd_2 < \gd_1$ so that 
\begin{equation}
\label{Eqn1.7}
T_2 = \Phi(B_{\gevar_3} \times [\gg, \gg] \times B_{\gd_2}) \subset B_{\gevar_2} \times [\gg, \gg] \times B_{\gd_1}\, , 
\end{equation}  
and again $B_{\gevar_3}$ and $B_{\gd_2}$ may be used for the Milnor fibers.   
\item[Step 4] We repeat step 2 again obtaining 
$0 < \gevar_4 < \gevar_3$, $0 < \gd_3 < \gd_2$, and $T_3$. 
\item[Step 5] We repeat step 2 again obtaining 
$0 < \gevar_5 < \gevar_4$, $0 < \gd_4 < \gd_3$, and $T_4$. 
\end{itemize}
Next we choose an $0 < \eta << \gd_4$ so that the Milnor fibration of $H$ is given by $H : H^{-1}(B_{\eta}^*) \cap B_{\gd_4} \to B_{\eta}^*$.

Since $\Phi(\C^n \times [-\gg, \gg] \times  $

\vspace{20ex}
\end{proof}

there exists $0 < \gd << \eta$ 
such that for balls $B_{\gd} \subset \C$ and$B_{\eta} \subset \C^N$  (with 
all balls centered $0$), we let $\cF_{\gd} = H^{-1}(B_{\gd}) \cap B_{\eta}$ 
so $H : \cF_{\gd} \to B_{\gd}$ is the Milnor fibration of $H$, with Milnor fiber 
$\cV_w = H^{-1}(w) \cap B_{\eta}$ for each $w \in B_{\gd}$.  By continuity, 
there is an $\gevar > 0$ so that $f_0(B_{\gevar}) \subset 
\cF_{\gd}$.  By possibly shrinking all three values, 
$H \circ f_0 : f_0^{-1}(\cF_{\gd}) \cap  B_{\gevar} \to B_{\gd}$ is the 
Milnor fibration of $H \circ f_0$.  Also, by the parametrized transversality 
theorem, for almost all $w \in B_{\gd}$, $f_0$ is transverse to $\cV_w$ and 
so the Milnor fiber of $H \circ f_0$ is given by $$X_w \,\, = \,\, (H \circ 
f_0)^{-1}(w) \cap B_{\gevar} \,\, = \,\, f_0^{-1}(\cV_w) \cap B_{\gevar}\, 
.$$  
\par
Thus, if we denote $f_0 | X_w = f_{0, w}$, then in cohomology with 
coefficient ring $R$, $f_{0, w}^* : H^*(\cV_w ; R) \to H^*(X_w ; R)$.

 For any 
of the three types of matrices with $(*)$ denoting $( )$ for general matrices, 
$(sy)$ for symmetric matrices, or $(sk)$ for skew-symmetric matrices, we let
 $$\cA^{(*)}(f_0; R) \,\, \overset{def}{=} \,\, f_{0, w}^* (H^*(\cV_w ; R))\, ,$$
 which we refer to as the {\em characteristic subalgebra} of the cohomology 
of the Milnor fiber $H^*(X_w ; R)$ of $X_0$.  This is an algebra over $R$, and 
the cohomology of the Milnor fiber of the matrix singularity $X_0$ is a graded
module over $\cA^{(*)}(f_0; R)$ (both with coefficients $R$). 

Now the same arguments given in general for $\cV, 0$ apply to the varieties of singular $m \times m$ matrices of any of the three types.  Given $f_0 : \C^n, 0 \to \C^N$, with $M = \C^N$ denoting any of the three spaces of $m \times m$ matrices.  We denote the Milnor fiber of $f_0$ by $F_m^{(*)}$, where $(*)$ denotes $( )$, resp. $(sy)$, resp. 
$(sk)$. 
\par
\par

\begin{table}
\begin{tabular}{|l|p{2in}|p{2in}|} \hline
\multicolumn{1}{|p{1.in}|}{ {\bf Singularity Type} } & 
\multicolumn{1}{c}{\bf \lq\lq Universal Singularity $\cV$ \rq\rq} & \multicolumn{1}{|p{1.5in}|}{\bf 
Singularities of type $\cV$} \\ \hline\hline
{\it Discrimants } & Discriminants of Stable Germs  & Discriminants of Finitely Determined Germs\\ \hline
{\it  Bifurcation Sets} & Bifurcation Sets of $\cG$-Versal Unfoldings & Bifurcation Sets of $\cG$-Finitely Determined Unfoldings \\ \hline
{\it Hyperplane Arrangements} & Special Central Hyperplane Arrangements & Generic Versions of Special Hyperplane Arrangements \\ \hline
{\it Hypersurface Arrangements} & Special Central Hyperplane Arrangements  & Hypersuface Arrangements of Special Type \\  \hline
\underline{Exceptional Orbit Hypersurfaces} & Defined by Algebraic Group Representations with Open Orbits &  multiple examples:\\  \hline
{\it Quiver Discriminants} &  Discriminants for Quiver Representations of Finite Type & 
Discriminants from Mappings to Quiver Representation Spaces \\ \hline
{\it Cholesky-Type Factorizations} &  Discriminants for Cholesky-Type Factorizations & 
Discriminants for Cholesky-Type Factorizations for Matrix Families. \\ \hline
{\em Matrix Singularities} & Varieties of Singular $m \times m$ Matrices of Three Types &  Matrix Singularities of Three Types \\ \hline
\end{tabular}
\vspace*{0.2cm}
\caption{Examples of General Cases of Singularities of Given Types.}
\label{table:gen.cases}
\end{table}